\tikzstyle{vertex}=[ circle, fill, draw, inner sep=0pt, minimum size=4pt,]
\tikzstyle{edge}= [thick]
\newtheorem{theorem}{Theorem}
\newtheorem{proposition}{Proposition}
\newtheorem{lemma}{Lemma}
\newtheorem{corollary}{Corollary}
\newtheorem{remark}{Remark}
\theoremstyle{definition}
\newtheorem{definition}{Definition}
\newtheorem{example}{Example}
\DeclareMathOperator{\supp}{supp}
\newcommand{\cC}{\mathcal{C}}
\newcommand{\cP}{\mathcal{P}}
\newcommand{\kk}{\mathfrak{k}}
\newcommand{\p}{p}
\title[Unique factorization of tensor products]{Unique Factorization For Tensor Products of Parabolic Verma Modules}
\author[]{K.N. Raghavan}
\address{The Institute of Mathematical Sciences, A CI of Homi Bhabha National Institute, Chennai 600113, India}
\email{knr@imsc.res.in}
\author[]{V. Sathish Kumar}
\address{The Institute of Mathematical Sciences, A CI of Homi Bhabha National Institute, Chennai 600113, India}
\email{vsathish@imsc.res.in}
\author[]{R. Venkatesh}
\address{Department of Mathematics, Indian Institute of Science, Bangalore 560012}
\email{rvenkat@iisc.ac.in}
\author[]{Sankaran Viswanath}
\address{The Institute of Mathematical Sciences, A CI of Homi Bhabha National Institute, Chennai 600113, India}
\email{svis@imsc.res.in}
\date{}
\keywords{Unique factorization, Kac-Moody Lie algebras, Parabolic Verma modules, tensor products}
\thanks{The first, second and fourth authors acknowledge partial funding from a DAE Apex Project grant to the Institute of Mathematical Sciences, Chennai.}
\subjclass[2020]{17B67, 17B10}
\begin{document}
\begin{abstract}
    Let $\mathfrak g$ be a symmetrizable Kac-Moody Lie algebra with Cartan subalgebra $\mathfrak h$. We prove a unique factorization property for tensor products of parabolic Verma modules. More generally, we prove unique factorization for products of characters of parabolic Verma modules when restricted to certain subalgebras of $\mathfrak h$. These include fixed point subalgebras of $\mathfrak h$ under subgroups of diagram automorphisms of $\mathfrak g$ and twisted graph automorphisms in the affine case.
\end{abstract}

\maketitle

\section{Introduction}

Investigating whether a given family of elements from a ring has the unique factorization property (UFP) is a well-studied problem in basic ring theory. 
In this paper, we are interested in studying a similar phenomenon for representations of Kac-Moody algebras. 
Let $\mathfrak g$ be a Kac-Moody algebra. Suppose $\{M_i\}_{i=1}^r$ and $\{N_j\}_{j=1}^s$ are  $\mathfrak g$--modules from a suitable category  satisfying 
\begin{equation}\label{tensorprod}
\bigotimes\limits_{i=1}^r M_i \cong \bigotimes\limits_{j=1}^s N_j
\end{equation} as $\mathfrak g$--modules, then we have the following natural questions:
\begin{enumerate}
    \item Are the number of factors on both sides of \eqref{tensorprod} equal, i.e., is $r=s$ ?
    \item If they are equal, can we compare the highest weights of $M_i$'s and $N_j$'s if they are highest-weight modules?
    \item Can we prove the unique factorization property for their characters (when defined and viewed as elements in the character ring)?
        \item Further, what more can one say about the individual modules $M_i$ and $N_j$? For example, are they isomorphic up to a permutation of factors ?

\end{enumerate}
In the literature, unique factorization theorems study these questions. For example, C. S. Rajan proved a unique factorization property for tensor products of finite-dimensional simple modules of a finite-dimensional simple Lie algebra $\mathfrak g$ in \cite{rajan}. Later in \cite{VV} and \cite{reif-venkatesh}, the authors extended Rajan's result suitably beyond the realm of finite dimensional simple Lie algebras. 

\medskip
All these papers \cite{rajan, VV, reif-venkatesh} study only the unique factorization property of tensor products of simple modules in some suitable categories. In this paper, we will consider some families of typically reducible modules and study their unique factorization properties. More precisely, we consider the following two families of modules of $\mathfrak g = \mathfrak g(A)$  symmetrizable Kac-Moody algebra (where 
$A$ is a $n\times n$ symmetrizable generalized Cartan matrix):
\begin{enumerate}
    \item Parabolic Verma modules of $\mathfrak g$: these are highest-weight modules subsuming the class of simple integrable highest-weight modules (but are typically neither simple nor integrable).
    \item Restrictions of parabolic Verma modules to suitable subalgebras of $\mathfrak g$, for example, to fixed point subalgebras of Dynkin diagram automorphisms.
\end{enumerate}
Let $\mathfrak h$ be a fixed Cartan subalgebra of $\mathfrak g$ and $\{\alpha_i^\vee : 1\le i\le n \}$ be co-roots corresponding to the simple roots with respect to $\mathfrak h.$
The parabolic Verma modules of $\mathfrak g$ are indexed by 
$(\lambda, I)$, where $\lambda\in \mathfrak h^*$ and $I\subseteq \{1, \ldots, n\}$ and we denote by $M(\lambda, I)$ the parabolic Verma module corresponding to the tuple $(\lambda, I).$
We call a subset $I\subseteq \{1, \ldots, n\}$  \textit{connected} if when $I$ is thought of as a subset of the nodes of the Dynkin diagram associated with $A$, the subgraph induced by $I$ is connected.
Here is our main theorem for parabolic Verma modules.
\begin{theorem}\label{thm_intro_ufpvm}
   Let $A$ be a $n \times n$ symmetrizable generalized Cartan matrix. Let $\mathfrak g = \mathfrak g (A)$ be the Kac-Moody Lie algebra associated with $A$ and let $\mathfrak h$ be the Cartan subalgebra. Suppose that
    \begin{equation}\label{tensorproductpvm}
        \bigotimes\limits_{k=1}^r M(\lambda_k, I_k) \cong \bigotimes\limits_{k=1}^r M(\mu_k, J_k)
    \end{equation}
    where for each $1\le k\le n$ we have
    \begin{enumerate}
        \item $\lambda_k, \mu_k \in \mathfrak h ^*$ and $I_k, J_k$ are connected subsets of $\{1, \ldots, n\}$,
        \smallskip\item $\lambda_k (\alpha_i ^\vee),\mu_k(\alpha_j^\vee)$ are positive integers for all $i\in I_k$ and $j\in J_k$.
    \end{enumerate}
    Then, $\sum_{k = 1 } ^r \lambda_k = \sum_{k=1} ^r \mu_k$ and there exists a permutation $\sigma \in \mathfrak S _r$ such that 
        $$\text{ $I_k = J_{\sigma k}$ and
         $\lambda_k(\alpha_i ^\vee) = \mu_{\sigma k}(\alpha_i ^\vee)$ for all $i\in I_k$.}$$
\end{theorem}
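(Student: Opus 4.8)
The plan is to pass from the module identity to a character identity, use the parabolic Weyl--Kac formula to reduce everything to an identity of ``numerators'' in the (completed) group ring, and then prove a unique factorization property for those numerators.

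\emph{Step 1: reduction to numerators.} A tensor product of highest-weight modules is again highest-weight, with one-dimensional top weight space and highest weight the sum; so \eqref{tensorproductpvm} immediately gives $\sum_k\lambda_k=\sum_k\mu_k$. Next, since $\lambda_k(\alpha_i^\vee)\in\mathbb Z_{>0}$ for $i\in I_k$, the Levi $\mathfrak g_{I_k}$-module $V_{I_k}(\lambda_k)$ is integrable, so the parabolic Weyl--Kac character formula applies and reads
\[
\operatorname{ch}M(\lambda,I)=\frac{\Xi(\lambda,I)}{e^{\rho}\prod_{\alpha\in\Delta^+}(1-e^{-\alpha})^{\operatorname{mult}\alpha}},\qquad \Xi(\lambda,I):=\sum_{w\in W_I}(-1)^{\ell(w)}e^{w(\lambda+\rho)},
\]
where $W_I=\langle s_i:i\in I\rangle$ and $\langle\rho,\alpha_i^\vee\rangle=1$. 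Taking characters of both sides of \eqref{tensorproductpvm} (these are category $\mathcal O$ modules, so $\operatorname{ch}$ is defined and multiplicative) and cancelling the identical $r$-th power of the full denominator gives $\prod_k\Xi(\lambda_k,I_k)=\prod_k\Xi(\mu_k,J_k)$, and dividing through by the monomial $e^{\sum_k\lambda_k+r\rho}=e^{\sum_k\mu_k+r\rho}$ yields
\[
\prod_{k=1}^r\widetilde\Xi(\lambda_k,I_k)=\prod_{k=1}^r\widetilde\Xi(\mu_k,J_k),\qquad \widetilde\Xi(\lambda,I):=\sum_{w\in W_I}(-1)^{\ell(w)}e^{w(\lambda+\rho)-(\lambda+\rho)} .
\]
Here $\widetilde\Xi(\lambda,I)$ is supported on $-Q_I^+$, has constant term $1$, and depends only on the pair $(I,\{\lambda(\alpha_i^\vee)\}_{i\in I})$; conversely this pair is recoverable from $\widetilde\Xi(\lambda,I)$, since the variables $e^{-\alpha_i}$ actually occurring are exactly those with $i\in I$ (the term from $w=s_i$ contributes $-e^{-(\lambda(\alpha_i^\vee)+1)\alpha_i}$, surviving because $\lambda+\rho$ is $W_I$-regular dominant), and division by the $\mathfrak g_I$-Weyl denominator $\widetilde\Xi(0,I)=\prod_{\alpha\in\Delta_I^+}(1-e^{-\alpha})^{\operatorname{mult}\alpha}$ recovers $e^{-\lambda}\operatorname{ch}_{\mathfrak g_I}V_I(\lambda)$ and hence $\lambda|_{\mathfrak h_I}$. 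Thus the theorem is equivalent to a unique factorization property for the family $\{\widetilde\Xi(\lambda,I):I\text{ connected},\ \lambda(\alpha_i^\vee)\in\mathbb Z_{>0}\ \forall i\in I\}$.

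\emph{Step 2: factoring the numerators.} I would work in the completed group ring $\widehat{\mathcal R}$ in which these identities live and prove the family property by induction on $r$, using two inputs. First, a \emph{support} lemma: when $I$ is of finite type, $\widetilde\Xi(\lambda,I)$ is a polynomial in the $e^{-\alpha_i}$ whose support has a unique maximal element, $(\lambda+\rho)-w_{0,I}(\lambda+\rho)$ ($w_{0,I}$ the longest element of $W_I$), occurring with coefficient $\pm1$; hence in any factorization the maximal supports add, which together with the recoverability of Step 1 lets one certify whether a candidate divisor is of the form $\widetilde\Xi(\nu,K)$. Second, a \emph{cancellation} lemma: if $\widetilde\Xi(\lambda,I)$ divides a product $\prod_m\widetilde\Xi(\nu_m,K_m)$, then it divides one of the factors --- connectedness of $I$ being essential here, since for disconnected $I$ the numerator already splits as a product over the connected components of $I$. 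Granting these, $\widetilde\Xi(\lambda_1,I_1)$ divides the right-hand side of the displayed identity, hence some $\widetilde\Xi(\mu_k,J_k)$; comparing maximal supports forces equality, so $I_1=J_k$ and $\lambda_1(\alpha_i^\vee)=\mu_k(\alpha_i^\vee)$ for $i\in I_1$; set $\sigma(1):=k$, cancel, and repeat. When some $I_k$ or $J_k$ is of affine or indefinite type, $\widetilde\Xi$ is a genuine power series and ``top term'' and ``non-unit'' degenerate; here I would replace the support lemma by a filtration of $\widehat{\mathcal R}$ by a linear functional strictly positive on $Q_I^+\setminus\{0\}$, or, uniformly, pass to formal logarithms, so that the identity becomes $\sum_k\log\widetilde\Xi(\lambda_k,I_k)=\sum_k\log\widetilde\Xi(\mu_k,J_k)$ with $\log\widetilde\Xi(\lambda,I)=\sum_{\alpha\in\Delta_I^+}(\operatorname{mult}\alpha)\log(1-e^{-\alpha})+\log\!\big(e^{-\lambda}\operatorname{ch}_{\mathfrak g_I}V_I(\lambda)\big)$, and extract information via the ring homomorphisms $e^{-\alpha_j}\mapsto0$ for $j$ outside a chosen subdiagram, which produce subdiagram-by-subdiagram identities such as $\prod_{i\in I_k}(1-e^{-(\lambda_k(\alpha_i^\vee)+1)\alpha_i})=\cdots$ and their multi-node refinements, to be reassembled using connectedness.

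\emph{Main obstacle.} I expect the genuine difficulty to be the cancellation lemma. The $\widetilde\Xi(\lambda,I)$ are highly reducible --- already $\widetilde\Xi(\lambda,\{i\})=1-e^{-(\lambda(\alpha_i^\vee)+1)\alpha_i}$ factors into cyclotomic-type pieces --- and they share irreducible factors, since every $\widetilde\Xi(\lambda,I)$ with $i\in I$ is divisible by the Weyl-denominator factor $1-e^{-\alpha_i}$; hence the UFD property of the ambient ring gives no control over how the irreducible factors regroup. One must instead exploit the representation-theoretic origin of each numerator --- that it comes from a single \emph{connected} Dynkin subdiagram together with a \emph{regular} dominant weight --- to rule out an irreducible factor of $\widetilde\Xi(\lambda_1,I_1)$ being split between two of the $\widetilde\Xi(\mu_k,J_k)$. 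The infinite-type case adds the burden of carrying this out in a completion where the naive notions used above are unavailable, which is why the logarithm/support-projection route (or a carefully chosen grading) is needed there.
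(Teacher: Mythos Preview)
Your Step~1 is exactly the paper's reduction: taking characters, cancelling the common Weyl denominator, and normalizing gives $\prod_k U(\lambda_k,I_k)=\prod_k U(\mu_k,J_k)$ (your $\widetilde\Xi$ is the paper's $U$), and the highest-weight comparison gives $\sum\lambda_k=\sum\mu_k$.

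Where you part company with the paper is Step~2. You propose a \emph{cancellation lemma} (``$\widetilde\Xi(\lambda,I)$ divides the product $\Rightarrow$ it divides a factor'') and correctly flag it as the main obstacle: since every $\widetilde\Xi(\lambda,I)$ with $i\in I$ is divisible by $1-e^{-\alpha_i}$, these numerators share irreducible factors and no primality-type statement is available. The paper does \emph{not} attempt any divisibility argument. Instead it takes the logarithm route you mention only in passing, and supplies the key invariant you are missing. Write $L(\lambda,I):=-\log U(\lambda,I)=\sum_\alpha c^{\lambda,I}_\alpha e^{-\alpha}$ and set
\[
\beta(\lambda,I):=\sum_{i\in I}(\lambda+\rho)(\alpha_i^\vee)\,\alpha_i.
\]
The two facts that drive the proof are: (i) if $c^{\lambda,I}_\alpha\neq0$ and $\supp(\alpha)=I$ then $\alpha-\beta(\lambda,I)\in Q^+$; and (ii) if $I$ is connected then $c^{\lambda,I}_{\beta(\lambda,I)}>0$. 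Point (ii) is where connectedness enters: one reduces to $\lambda=0$ via a combinatorial identity making the coefficient $\lambda$-independent, and then the Weyl denominator identity gives $c^{0,I}_{\beta(0,I)}=\mathrm{mult}\big(\sum_{i\in I}\alpha_i\big)>0$ precisely because $\sum_{i\in I}\alpha_i$ is a root when $I$ is connected. Since $\beta(\lambda,I)$ determines the pair $(I,\{\lambda(\alpha_i^\vee)\}_{i\in I})$, one now defines a preorder $(\lambda,I)\succeq(\mu,J)$ by ``$I\supsetneq J$, or $I=J$ and $\beta(\mu,J)-\beta(\lambda,I)\in Q^+$'', picks a $\succeq$-maximal pair among all $(\lambda_k,I_k),(\mu_k,J_k)$, and reads off from (i)--(ii) that the coefficient of $e^{-\beta}$ at that maximal $\beta$ is strictly positive on one side and can only receive contributions from $\approx$-equivalent pairs on the other; cancel and induct.

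In short: your reduction is right, and your instinct to pass to logarithms is the correct one, but the plan as written has a gap because the cancellation lemma is neither proved nor true in any obvious sense, and your logarithm sketch does not isolate the invariant $\beta(\lambda,I)$ or the positivity $c^{\lambda,I}_{\beta(\lambda,I)}>0$ that replaces it. Your finite-type ``top term'' $(\lambda+\rho)-w_{0,I}(\lambda+\rho)$ is on the wrong end of the support (maximal rather than minimal-with-full-support) and does not survive to infinite type; the paper's $\beta(\lambda,I)$ works uniformly.
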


\medskip
\noindent
Since $M(\lambda, I)$ are simple integrable highest weight modules when $\lambda$ is dominant and $I=\{1, \ldots, n\}$, this may be viewed as an extension of the results of \cite{rajan, VV}.

\medskip
Next, let $\Gamma$ be a subgroup of Dynkin diagram automorphisms of $\mathfrak g$. Consider the fixed point subalgebra $\mathfrak g^\Gamma$ of $\mathfrak g$ with respect to $\Gamma$. Then we can restrict the modules in Equation~\eqref{tensorproductpvm} to $\mathfrak g^\Gamma$ and ask whether the unique factorization property holds for these $\mathfrak g^\Gamma$ modules. Under some natural conditions, we answer this question affirmatively. More precisely, we prove the following:
\begin{theorem}\label{mainthm2intro}
    Let $A$ be a $n \times n$ symmetrizable generalized Cartan matrix of finite, affine, or hyperbolic type. Let $\mathfrak g = \mathfrak g (A)$ be the associated Kac-Moody Lie algebra, with Cartan subalgebra $\mathfrak h$. Let $\Gamma$ be a group of diagram automorphisms of $\mathfrak g$ and let $\mathfrak g ^{\Gamma}$ be the fixed point subalgebra. Suppose that
    \begin{equation*}
        \bigotimes _{k=1} ^r Res_{\mathfrak g ^{\Gamma}} M(\lambda_k, I_k) = \bigotimes _{k=1} ^r Res_{\mathfrak g ^{\Gamma}} M(\mu_{k}, J_{k})
    \end{equation*}
    where for each $1\le k \le r$, we have 
    \begin{enumerate}
        \item $\lambda_k, \mu_k\in (\mathfrak h ^*)^\Gamma = \{\nu \in \mathfrak h^* : \nu(\omega(h)) = \nu(h) \; \text{for all} \; \omega\in \Gamma, \, h \in \mathfrak h\}$ and $I_k, J_k$ are connected subsets of $\{1,\ldots, n\}$,
        \smallskip\item for each $i \in I_k$ and $j\in J_k$ we have $\lambda_k (\alpha_i ^\vee)$, $\mu_k (\alpha_j ^\vee)$ are positive integers, and
        \smallskip\item $I_k$ and $J_k$ are $\Gamma$-stable, i.e., are unions of $\Gamma$-orbits. 
    \end{enumerate}
    Then, $\sum_{k = 1 } ^r \lambda_k = \sum_{k=1} ^r \mu_k$ and there exists a permutation $\sigma \in \mathfrak S _r$ such that 
$$\text{ $I_k = J_{\sigma k}$ and
         $\lambda_k(\alpha_i ^\vee) = \mu_{\sigma k}(\alpha_i ^\vee)$ for all $i\in I_k$.}$$
\end{theorem}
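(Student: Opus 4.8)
The plan is to carry the hypothesis over to an identity in a completion of the group algebra $\mathbb{Z}[(\mathfrak h^{\Gamma})^{*}]$ and then to re-run, over that ring, the argument behind Theorem~\ref{thm_intro_ufpvm}; the role of conditions (1)--(3) is precisely that they let each step survive the restriction map $\pi\colon\mathfrak h^{*}\to(\mathfrak h^{\Gamma})^{*}$. (Organized properly, this yields a general ``restriction principle'' that has both theorems as instances.)

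\textbf{Setting up.} Fix $\rho^{\vee}\in\mathfrak h$ with $\alpha_{i}(\rho^{\vee})=1$ for all $i$ and put $\overline{\rho^{\vee}}=\tfrac1{|\Gamma|}\sum_{\omega\in\Gamma}\omega\rho^{\vee}\in\mathfrak h^{\Gamma}$; then $\pi(\alpha)(\overline{\rho^{\vee}})=\operatorname{ht}\alpha>0$ for every $\alpha\in\Delta_{+}$. This grading shows $\operatorname{Res}_{\mathfrak g^{\Gamma}}M(\lambda_{k},I_{k})$ has finite-dimensional $\mathfrak h^{\Gamma}$-weight spaces, so $\pi$ prolongs to a continuous ring homomorphism $\Psi$ of the relevant completions, and $\Psi(D)$ is invertible, where $D=\prod_{\alpha\in\Delta_{+}}(1-e^{-\alpha})^{-\operatorname{mult}\alpha}$. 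Using the standard identity $\operatorname{ch}M(\lambda,I)=N_{\lambda,I}\,D$ with $N_{\lambda,I}=\sum_{w\in W_{I}}(-1)^{\ell(w)}e^{w(\lambda+\rho)-\rho}$ (legitimate since $(\lambda+\rho)(\alpha_{i}^{\vee})>0$ for $i\in I$), we apply $\Psi$ to the hypothesis and cancel $\Psi(D)^{r}$ to get
\[
\prod_{k=1}^{r}\Psi(N_{\lambda_{k},I_{k}})=\prod_{k=1}^{r}\Psi(N_{\mu_{k},J_{k}}).
\]
Comparing the unique top-degree term forces $\pi(\sum\lambda_{k})=\pi(\sum\mu_{k})$; since $\pi$ is injective on $(\mathfrak h^{*})^{\Gamma}$ (a $\Gamma$-invariant functional vanishing on $\mathfrak h^{\Gamma}$ also kills $\sum_{\omega}(\omega-1)\mathfrak h$, hence is zero) and $\lambda_{k},\mu_{k}\in(\mathfrak h^{*})^{\Gamma}$, this gives $\sum\lambda_{k}=\sum\mu_{k}$, the first assertion.

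\textbf{Per-node data.} Fix a node $i$ with $\Gamma$-orbit $\mathcal O$ and suppose first that $\mathcal O$ has no two adjacent nodes, so $\mathfrak g_{\mathcal O}\cong\mathfrak{sl}_{2}^{\,|\mathcal O|}$. The part of $\operatorname{ch}M(\lambda,I)$ supported on $\lambda-\mathbb{Z}_{\ge0}\mathcal O$ equals $N_{\lambda,\,\mathcal O\cap I}\cdot D_{\mathcal O}$ with $D_{\mathcal O}=\prod_{j\in\mathcal O}(1-e^{-\alpha_{j}})^{-1}$; because the $I_{k},J_{k}$ are unions of $\Gamma$-orbits and the $\lambda_{k},\mu_{k}$ are $\Gamma$-invariant, the computation $N_{\lambda,\mathcal O}=e^{\lambda}\prod_{j\in\mathcal O}(1-e^{-(\lambda(\alpha_{j}^{\vee})+1)\alpha_{j}})$ shows that the $\mathcal O$-slice of the displayed identity, after $\Psi$ and the substitution $t=e^{-\pi(\alpha_{i})}$, becomes
\[
\prod_{k\,:\,i\in I_{k}}\bigl(1-t^{\lambda_{k}(\alpha_{i}^{\vee})+1}\bigr)^{|\mathcal O|}=\prod_{k\,:\,i\in J_{k}}\bigl(1-t^{\mu_{k}(\alpha_{i}^{\vee})+1}\bigr)^{|\mathcal O|}.
\]
Since $\mathbb{Z}[t]$ is a UFD and the multiset $\{n_{\ell}\}$ is recovered from $\prod_{\ell}(1-t^{n_{\ell}})$ (take $-\log$, group powers of $t$, M\"obius-invert), we conclude that for every node $i$ the multisets $\{\lambda_{k}(\alpha_{i}^{\vee}):i\in I_{k}\}$ and $\{\mu_{k}(\alpha_{i}^{\vee}):i\in J_{k}\}$ coincide; in particular $\#\{k:i\in I_{k}\}=\#\{k:i\in J_{k}\}$, so $\sum_{k}|I_{k}|=\sum_{k}|J_{k}|$. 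This is the node-by-node input that drives the proof of Theorem~\ref{thm_intro_ufpvm}, and $\Gamma$-invariance is exactly what prevents restriction from destroying it.

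\textbf{Gluing --- the crux.} Node-by-node data alone does not determine the configuration (distinct families can agree on every node), so one must glue, and this is where connectedness enters. Induct on $\sum_{k}|I_{k}|$; the base case (all $I_{k},J_{k}$ empty, all factors Verma) is immediate from $\sum\lambda_{k}=\sum\mu_{k}$. For the step, pick $I_{k_{0}}$ maximal under inclusion among all $I_{k},J_{k}$ and pass to the slice supported on $\sum\lambda_{k}-\mathbb{Z}_{\ge0}I_{k_{0}}$. As before, this slice of $\operatorname{ch}M(\lambda,I)$ equals $N_{\lambda,\,I_{k_{0}}\cap I}\cdot D_{I_{k_{0}}}$ --- an $I_{k_{0}}$-slice of a parabolic Verma character is again a parabolic Verma character of the Kac--Moody subalgebra $\mathfrak g_{I_{k_{0}}}$ on the nodes $I_{k_{0}}$ --- and $I_{k_{0}}\cap I_{k}=I_{k_{0}}$ only when $I_{k}=I_{k_{0}}$. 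Hence the factors with $I_{k}=I_{k_{0}}$ (resp.\ $J_{k}=I_{k_{0}}$) contribute the full characters $\operatorname{ch}V_{I_{k_{0}}}(\lambda_{k})$, while the rest carry characters of proper, hence lower-rank, subsets that by lower instances of the result already agree on the two sides; cancelling the latter leaves a unique-factorization problem for the simple integrable $\mathfrak g_{I_{k_{0}}}$-modules $V_{I_{k_{0}}}(\lambda_{k})$ restricted to $\mathfrak g_{I_{k_{0}}}^{\Gamma}$ --- the restricted analogue of the $I_{k}=\{1,\dots,n\}$ case of Theorem~\ref{thm_intro_ufpvm}, handled by adapting the methods of \cite{rajan,VV} while tracking $\pi$. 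This matches $(\lambda_{k_{0}},I_{k_{0}})$ with some $(\mu_{k_{0}'},J_{k_{0}'})$ having $I_{k_{0}}=J_{k_{0}'}$ and equal coroot values; setting $\sigma(k_{0})=k_{0}'$ and removing these factors (the completed group algebra is a domain, and the leftover exponential shift recombines via $\sum\lambda_{k}=\sum\mu_{k}$) reduces to $r-1$ factors. The principal obstacles are (a) making the ``cancel the lower-rank part'' step rigorous when the $I_{k}$ overlap --- where connectedness of the $I_{k},J_{k}$ and the per-node counts do the real work --- and (b) the degenerate foldings, in which an orbit $\mathcal O$ contains adjacent nodes and the $\mathfrak{sl}_{2}$-string computation above is replaced by its $BC$-type analogue; handling (b) is why the theorem is stated for $A$ of finite, affine, or hyperbolic type and why twisted graph automorphisms are singled out.
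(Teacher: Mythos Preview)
Your setting-up is fine and the per-node computation is correct, but the gluing step contains a genuine gap. After slicing to $I_{k_{0}}$ you obtain an identity
\[
\prod_{k}\Psi\bigl(\operatorname{ch}_{I_{k_{0}}}M(\lambda_{k},\,I_{k}\cap I_{k_{0}})\bigr)
=\prod_{k}\Psi\bigl(\operatorname{ch}_{I_{k_{0}}}M(\mu_{k},\,J_{k}\cap I_{k_{0}})\bigr),
\]
and you want to cancel from both sides the factors with $I_{k}\cap I_{k_{0}}\subsetneq I_{k_{0}}$, leaving only the simple-module factors. But you give no reason why the product of those ``lower-rank'' factors on the left equals the one on the right: saying they agree ``by lower instances of the result'' is circular, because an inductive instance would apply to the \emph{entire} sliced identity, not to a chosen subproduct --- and if you could invoke it on the whole sliced identity you would already have the matching, with no need for a separate restricted Rajan--Venkatesh--Viswanath step. (There is also the side issue that $I_{k}\cap I_{k_{0}}$ need not be connected, so the sliced factors need not lie in $\bar{\cC}$.) You flag this as obstacle~(a), but it is not a technicality; it is the crux, and the sketch does not resolve it. Deferring the simple-module case to ``adapting \cite{rajan,VV} while tracking $\pi$'' likewise hides exactly the content that must be supplied.

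The paper avoids the whole slicing/cancellation difficulty by taking logarithms. One sets $L(\lambda,I)=-\log U(\lambda,I)$ and shows that the coefficient of $e^{-\beta(\lambda_{j},I_{j})}$ in $L(\lambda_{k},I_{k})$ is nonzero only when $(\lambda_{k},I_{k})\succeq(\lambda_{j},I_{j})$ in a natural preorder, and is strictly positive when they are $\approx$-equivalent --- ultimately because $\sum_{i\in I}\alpha_{i}$ is a root of $\mathfrak g_{I}$ when $I$ is connected. This survives restriction once $\beta$ is replaced by $\bar{\beta}$, built from a \emph{lean lift} of $I$; the existence of such a lift for every $\Gamma$-stable connected $I$ is Proposition~\ref{prop_graph_tildeconnected}. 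One then peels a $\succeq$-maximal pair off both sides of $\sum_{k}\pi(L(\lambda_{k},I_{k}))=\sum_{k}\pi(L(\mu_{k},J_{k}))$ and inducts --- no slicing, no product cancellation, and the simple-module case is not singled out. Finally, your explanation of the finite/affine/hyperbolic hypothesis is not the right one: it has nothing to do with orbits containing adjacent nodes (those are handled uniformly by the lean-lift machinery) but is needed so that $\mathfrak h^{\Gamma}$ coincides with $\mathfrak k=\bigcap_{i\sim j}\ker(\alpha_{i}-\alpha_{j})$, which requires the GCM to have corank at most~$1$ (Proposition~\ref{prop-h_gamma=k}).
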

\noindent
We in fact prove stronger versions of Theorems ~ \ref{thm_intro_ufpvm}, \ref{mainthm2intro}, see $\S 4 \text{ and } \S 5$ for more precise statements. 
It is to be noted that in \cite{sas} a similar theorem is proved in the setting of simple modules for finite-dimensional simple Lie algebras with a completely different set of hypotheses. 
It is easy to see that the converse of Theorems~\ref{thm_intro_ufpvm}, \ref{mainthm2intro} hold at the level of characters. Further,  if we assume that complete reducibility holds for the tensor products, then we can also prove the converse of the Theorems~\ref{thm_intro_ufpvm}, \ref{mainthm2intro}. For example, the converse is true (see \cite[Page No. 180, Corollary 10.7]{Kac90}) when
\begin{enumerate}
    \item   all $\lambda_k$'s and $\mu_k$'s are integral dominant weights and $I_k=J_k=\{1,\ldots, n\}$ (for all $1\le k\le n$) in Theorem~\ref{thm_intro_ufpvm}  and
    \item the fixed point subalgebra $\mathfrak{g}^\Gamma$ is again Kac-Moody type and all $\lambda_k$'s and $\mu_k$'s are integral dominant weights and $I_k=J_k=\{1,\ldots, n\}$ (for all $1\le k\le n$)
 in Theorem~\ref{mainthm2intro}. 
\end{enumerate}

\medskip
The paper is organized as follows: In Section~\ref{section2}, we set up the notation and preliminaries.
In Section~\ref{section3}, we prove some key technical results that will be needed to prove our main theorems. 
In Section~\ref{section4}, we prove our main theorem for parabolic Verma modules. In Section~\ref{section5}, we consider and prove unique factorization properties for the characters of restricted parabolic Verma modules. We apply this in Section~\ref{section6} and prove unique factorization properties for parabolic Verma modules when they are restricted to $\mathfrak g^\Gamma$, where $\mathfrak g$ is general and $\Gamma$ is a
subgroup of Dynkin diagram automorphisms or $\mathfrak g$ is of affine type and $\Gamma = \langle \tau \rangle$ for some $\tau$ twisted graph automorphism.

\section{Preliminaries}\label{section2}
All vector spaces are assumed to be defined over complex numbers $\mathbb{C}$ throughout the article. For a Lie algebra $\mathfrak g$, we denote by $U(\mathfrak{g})$  the universal enveloping algebra of $\mathfrak g.$ 
For a vector space $V$ over $\mathbb{C}$, we denote by $V^*$ its dual space.
\subsection{Structure Theory of Symmetrizable Kac-Moody algebras}
In this subsection, we fix some notation and review the structure theory of Kac-Moody algebras, closely following \cite{Kac90}.
Let $n$ be a positive integer and $A=(a_{ij})_{n\times n}$ a \emph{generalized Cartan matrix} (GCM). That is, 
\begin{enumerate}
    \item $a_{ii}=2$, for all $1\le i\le n$,
    \item $a_{ij}$ is a non-positive integer for all $1\le i\neq j\le n$, and
    \item $a_{ij}=0$ if and only if $a_{ji}=0$ for all $1\le i, j \le n$.
\end{enumerate}
We say $A$ is \emph{symmetrizable} if there exists a diagonal matrix $D=\mathrm{diag}(d_1, \ldots, d_n)$, with $d_i$'s being positive real numbers, such that $DA$ is symmetric. Let us denote $S=\{1,\ldots, n\}$.

\medskip
Let $A$ be a symmetrizable GCM and let $\mathfrak{g}:=\mathfrak{g}(A)$ be the Kac-Moody Lie algebra associated with $A$ and $\mathfrak{h}$ be a fixed Cartan subalgebra of $\mathfrak{g}$.
The Cartan subalgebra $\mathfrak{h}$ acts semisimply on $\mathfrak{g}$ via the adjoint action. Denoting by $\Delta$ the set of roots of $(\mathfrak{g}, \mathfrak{h})$, the corresponding root space decomposition is
$$\mathfrak{g} = \mathfrak{h} \oplus \bigoplus_{\alpha\in \Delta} \mathfrak{g}_\alpha,$$
where $\mathfrak{g}_\alpha:=\{x\in \mathfrak{g} : [h, x]=\alpha(h)x\;\forall h\in \mathfrak{h}\}$ for $\alpha\in \Delta$. 

\medskip
Let the simple system (i.e., simple roots) of $\Delta$ coming from the realization of $A$ be
$\Pi := \{\alpha_1, \cdots, \alpha_n\}$. We denote by $\Delta_+$ the set of positive roots of $\Delta$ with respect to $\Pi$. The root lattice $Q$ is the set of all integer linear combinations of elements of $\Pi$. Any $\alpha \in Q$ can be written uniquely as $\sum_{i=1}^{n} t_i \alpha_i$ where $t_i$ are integers. The set of all $\alpha \in Q$ for which all the $t_i$ are non-negative integers is denoted $Q^+$. The support of $\alpha \in Q$ denoted by $\supp(\alpha)$ is the set of all $k \in S$ for which $t_k \neq 0$. 
For $\alpha\in \Pi$, let $\alpha ^\vee \in \mathfrak{h}$ denote the coroot corresponding to $\alpha.$ Let $$\mathfrak{h}\cup \{e_i, f_i : i\in S\}$$ be the Chevalley generators of $\mathfrak{g}$.  Note that the derived subalgebra $[\mathfrak{g}, \mathfrak{g}]$ is generated by $\{e_i, f_i : i\in S\}$ and $[\mathfrak g, \mathfrak g] \cap \mathfrak h = \text{span}\{\alpha_1 ^\vee, \ldots, \alpha_n ^\vee\}$.
The Weyl group of $\mathfrak{g}$ is the subgroup of $GL(\mathfrak{h}^*)$ generated by the reflections $\{s_i : i\in S\}$, where $s_i:\mathfrak{h}^*\to \mathfrak{h}^*$ is defined by $$s_i(\nu)=\nu-\nu(\alpha_i^\vee)\alpha_i \quad \quad \forall \nu \in \mathfrak{h}^*$$
The parabolic subgroup $W_I$ corresponding to $I \subseteq S$ is the subgroup of $W$ generated by $\{s_i : i\in I\}$. 
It is a fact that $W_I$ is a Coxeter group with Coxeter generators $\{s_i: i \in I\}$.  
Given $w\in W_I$,  
the length of $w$ is 
$\ell(w):=\mathrm{min} \{k : w=s_{i_1}\cdots s_{i_k}\}$; an expression $s_{i_1} \cdots s_{i_k}$ for $w$ is said to be reduced if $k = \ell(w)$.
Fix a reduced expression $s_{i_1}\cdots s_{i_k}$ of $w\in W_I$. Define the support of $w$ by
$$I(w):=\{s_{i_1},\ldots, s_{i_k}\}.$$ 
It is a well-known fact (\emph{Tits theorem}) that $I(w)$ is independent of the choice of the chosen reduced expression.

\medskip
A subset $I\subseteq S$ is said to be \textit{connected} if the submatrix of the GCM indexed by $I$ is indecomposable (or equivalently, the subgraph of the Dynkin graph of $\mathfrak g$ induced by $I$ is connected). Note that any subset $I$ of $S$ can be written as a finite disjoint union of connected subsets, called connected components, of $I$ and this decomposition is unique up to a permutation of the connected components.

\subsection{Parabolic Verma Modules}\label{parabolicVerma}
For $\lambda\in \mathfrak{h}^*$, we denote by $M(\lambda)$ the Verma module associated to $\lambda$. The \emph{integrability} \label{integrability} of $\lambda$ is defined by
$$J_{\lambda} := \{i \in S\;|\; \lambda(\alpha_i ^{\vee}) \text{ is a non-negative integer} \}$$
For $\lambda \in \mathfrak{h}^*$ and $I \subseteq J_{\lambda},$ the \emph{Parabolic Verma Module} corresponding to $(\lambda, I)$ is defined by
    \begin{equation*}
        M(\lambda, I) := \frac{M(\lambda)}{\sum_{i \in I} U(\mathfrak{g})f_{\alpha_i} ^{\lambda(\alpha_i ^{\vee}) + 1} m_{\lambda}}
    \end{equation*}
 where $m_\lambda$ is the cyclic generator (or highest weight vector) of $M(\lambda)$. The parabolic Verma modules are modules in category $\mathcal{O}$ (see \cite{ apoorva-dhillon, BGGO} for more details). When $\lambda$ is dominant and integral (i.e., $J_{\lambda} = S$) we see that $M(\lambda, S)=V(\lambda)$, the unique simple $\mathfrak g$ module with highest weight $\lambda$. When $I = \emptyset$ we have
 $M(\lambda, I)=M(\lambda)$ for all $\lambda\in \mathfrak h^*$. So, the parabolic Verma modules interpolate between the Verma modules and simple modules in category $\mathcal{O}$.

\subsection{Characters and their restrictions}
Let $\bar{\mathfrak{g}}$ be a Lie subalgebra of $ \mathfrak{g}$ 
and $\bar{\mathfrak{h}} := \bar{\mathfrak{g}} \cap \mathfrak{h}$. Suppose $V$ is a $\mathfrak{h}$--weight module and 
$$V=\bigoplus\limits_{\nu \in \mathfrak{h}^*} V_\nu$$
is the $\mathfrak{h}$--weight space decomposition of $V$. Note that by definition, we have $\mathrm{dim} V_{\nu}<\infty$ for all $\nu\in \mathfrak{h}^*.$
The $\mathfrak{h}$--character of $V$ is defined by
$$\mathrm{ch}_{\mathfrak{h}}(V) = \sum _{\nu \in \mathfrak{h}^*} \mathrm{dim} (V_{\nu}) e^{\nu}.$$
Let $p:\mathfrak{h}^*\to \bar{\mathfrak{h}}^*$ be the restriction map. The restriction of the character of $V$ to $\bar{\mathfrak h}$ is defined as
 $$\mathrm{ch}_{\bar{\mathfrak{h}}}(V) = 
 \sum _{\nu' \in (\bar{\mathfrak{h}})^*} \left(\sum_{\{\nu \in \mathfrak{h}^*\, : \, p(\nu) = \nu'\}} \mathrm{dim} V_{\nu}\right) e^{\nu'}.$$
Note that the inner sum $\sum_{\{\nu \in \mathfrak{h}^*\, : \, p(\nu) = \nu'\}} \mathrm{dim} V_{\nu}$ need not be finite always. 
We use this definition whenever it makes sense, i.e., whenever this inner sum is finite.
This inner sum will be finite and the restricted character $\mathrm{ch}_{\bar{\mathfrak{h}}}(V)$ will be well-defined in all our examples.

\subsection{The Weyl-Kac character formula}
For $\lambda\in P^+$, the Weyl-Kac character formula gives
$$\mathrm{ch}_\mathfrak{h}\, V(\lambda)=\frac{\sum\limits_{w\in W}(-1)^{\ell(w)}e^{w(\lambda+\rho)-\rho}}{
\prod_{\alpha \in \Delta^+} (1-e^{-\alpha})^{\mathrm{dim}\, \mathfrak{g}_{\alpha}}}$$
\smallskip where $\rho \in \mathfrak h ^*$ is some fixed functional that satisfies $\rho(\alpha_i ^\vee) = 1$. A similar formula is known for parabolic Verma modules. The following proposition can be found in \cite[Proposition 7.10]{apoorva-dhillon}.
    Let $\lambda \in \mathfrak{h}^*$ and $I \subseteq J_{\lambda}$. Then
    \begin{equation}\label{parabolicverma}
    \mathrm{ch}_\mathfrak{h}\ M(\lambda, I) =  \frac{\sum_{w \in W_I} (-1)^{\ell(w)}e^{w(\lambda+\rho) - \rho}}{\prod_{\alpha \in \Delta^+} (1-e^{-\alpha})^{\mathrm{dim}\, \mathfrak{g}_{\alpha}}}.
    \end{equation}
\noindent
Let $\lambda\in \mathfrak{h}^*$ and $I \subseteq J_{\lambda}$.
We define the \emph{normalised Weyl numerator} corresponding to the tuple $(\lambda, I)$ by 
\begin{equation*}
    U(\lambda, I) := \; e^{-\lambda}\sum_{w \in W_I} (-1)^{\ell(w)}e^{w(\lambda+\rho) - \rho}
\end{equation*}
In this notation 
we can rewrite Equation~\eqref{parabolicverma} as
\begin{equation*}
    \mathrm{ch}_\mathfrak{h}\ M(\lambda, I) = e^{\lambda} \frac{U(\lambda, I)}{U(0,S)}.
\end{equation*}

\medskip
\section{Technical results}\label{section3}
In this section, we will prove some technical results that will be used later to prove our main theorems.
We freely use the notations that were developed in the previous section. Let us define $\cP:= \{(\lambda, I)| \lambda \in \mathfrak h ^* \text{ and } I \subseteq J_{\lambda}\}$. This is the indexing set for the Parabolic Verma modules of $\mathfrak g$.
\subsection{}
The following is elementary; see, for example,  \cite[\S4.1 (in particular, Lemma 2)]{VV}.
\begin{proposition}\label{old_props_from_VV}
    Let $(\lambda,I) \in \cP$ and $w \in W_I$. Then  
    \begin{enumerate}
        \item $\lambda+\rho - w(\lambda + \rho) \in Q^+,$
         \item $\supp(w(\lambda+\rho) - (\lambda + \rho)) = I(w)$
    \end{enumerate}
\end{proposition}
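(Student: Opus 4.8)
The plan is to work throughout with a single fixed reduced expression $w = s_{i_1}\cdots s_{i_k}$ in which every $i_j$ lies in $I$ (such an expression exists because $w \in W_I$), and to set $w_0 := e$ and $w_j := s_{i_1}\cdots s_{i_j}$ for $1 \le j \le k$. The engine of the whole argument is the telescoping identity
\[
(\lambda+\rho) - w(\lambda+\rho) \;=\; \sum_{j=1}^k \big(w_{j-1}(\lambda+\rho) - w_j(\lambda+\rho)\big) \;=\; \sum_{j=1}^k w_{j-1}\big((\lambda+\rho) - s_{i_j}(\lambda+\rho)\big) \;=\; \sum_{j=1}^k (\lambda+\rho)(\alpha_{i_j}^\vee)\, w_{j-1}(\alpha_{i_j}),
\]
which reduces both claims to an analysis of the individual roots $w_{j-1}(\alpha_{i_j})$ and of the scalars $(\lambda+\rho)(\alpha_{i_j}^\vee)$.

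For part (1), I would first note that $(\lambda+\rho)(\alpha_{i_j}^\vee) = \lambda(\alpha_{i_j}^\vee) + 1$ is a positive integer, since $i_j \in I \subseteq J_\lambda$ forces $\lambda(\alpha_{i_j}^\vee)$ to be a non-negative integer. Next, since any prefix of a reduced word is reduced, $w_{j-1}s_{i_j}$ is reduced, so $\ell(w_{j-1}s_{i_j}) > \ell(w_{j-1})$; by the standard length criterion this gives $w_{j-1}(\alpha_{i_j}) \in \Delta^+ \subseteq Q^+$. Hence every summand in the displayed identity is a positive-integer multiple of a positive root, so the sum lies in $Q^+$, proving (1).

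For part (2), the key observation is that, since the summands above all lie in $Q^+$, there is no cancellation, so
\[
\supp\big(w(\lambda+\rho) - (\lambda+\rho)\big) \;=\; \bigcup_{j=1}^{k}\supp\big(w_{j-1}(\alpha_{i_j})\big),
\]
the nonzero scalars $(\lambda+\rho)(\alpha_{i_j}^\vee)$ not affecting supports. The inclusion ``$\subseteq$'' is then immediate: a simple reflection $s_m$ changes a vector only by a multiple of $\alpha_m$, so $w_{j-1}(\alpha_{i_j}) \in \alpha_{i_j} + \sum_{l<j}\mathbb{Z}\alpha_{i_l}$, whence $\supp(w_{j-1}(\alpha_{i_j})) \subseteq \{i_1,\dots,i_j\} \subseteq I(w)$ (using Tits' theorem to identify $I(w)$ with $\{i_1,\dots,i_k\}$). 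For the reverse inclusion, given $i \in I(w)$ I would pick the \emph{first} index $j_0$ with $i_{j_0} = i$; then $i \notin \{i_1,\dots,i_{j_0-1}\}$, so in $w_{j_0-1}(\alpha_i) \in \alpha_i + \sum_{l<j_0}\mathbb{Z}\alpha_{i_l}$ the coefficient of $\alpha_i$ is exactly $1$ (linear independence of $\Pi$), hence $i \in \supp(w_{j_0-1}(\alpha_i))$ and therefore $i \in \supp(w(\lambda+\rho)-(\lambda+\rho))$.

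The step I would be most careful about is the ``no cancellation'' point in part (2): it works only because each telescoping summand is \emph{individually} in $Q^+$, which is precisely the content of part (1) --- so in practice the two parts are proved together rather than sequentially. Everything else is routine bookkeeping with reduced words and the standard action of the Weyl group on $\mathfrak{h}^*$ (as in \cite{Kac90}); no deeper input is required. One could instead induct on $\ell(w)$, peeling off $s_{i_k}$ and using $w(\alpha_{i_k}) \in \Delta^+$ together with the inductive hypothesis applied to $s_{i_1}\cdots s_{i_{k-1}}$, but the telescoping approach is cleaner and dispatches both statements at once.
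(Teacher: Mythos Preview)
Your proof is correct and is precisely the standard elementary argument; the paper does not give its own proof but simply calls the result elementary and refers to \cite[\S4.1, Lemma~2]{VV}, where essentially the same induction/telescoping over a reduced expression is carried out. There is nothing to add.
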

\noindent   Note that  $U(\lambda, I)$ can be viewed as a formal power series in the variables $\{x_i:=e^{-\alpha _i}: i \in I\}$ by Proposition~\ref{old_props_from_VV}.  
  For $(\lambda, I) \in \cP$, we let
    $$ L (\lambda, I) := -log(U(\lambda, I)) = \sum\limits_{\alpha \in Q} c^{\lambda, I} _{\alpha}\; e^{-\alpha}$$
    \noindent Here, the logarithm is applied to $U(\lambda, I)$ treating it as a formal power series whose constant term is $1$. Note that
             $$c^{\lambda, I} _{\alpha}=0 \,
            \text{if}\, \alpha\notin Q^+.$$
\noindent
    We need some additional notations which we collect here:
\begin{align*}
    &\cC := \{(\lambda, I) \in \cP| \; I \;\text{is connected and nonempty}\}\\
  &\beta(\lambda, I) := \sum\limits_{i \in I} (\lambda + \rho)(\alpha_i ^{\vee})\, \alpha_i \in Q^+ \text{ for } (\lambda, I) \in \cP.
\end{align*}

\subsection{} We need the following key results.
\begin{proposition} \label{prop_new} Let $(\lambda, I) \in \cP$ and let $I=I_1\dot\cup \cdots \dot\cup I_r$ be the decomposition of $I$ into connected components. 
Then 
    \begin{enumerate}
        \item $(\lambda, J) \in \cP$ for all $J \subseteq I$.
        \smallskip\item  $L(\lambda, I) = L(\mu, J) \iff U(\lambda, I) = U(\mu, J) \iff \beta(\lambda, I) = \beta(\mu, J) \iff I=J\; and \;  \lambda(\alpha_i ^{\vee}) = \mu(\alpha_i ^{\vee})$ for all $i \in I$.
        \smallskip\item $L(\lambda, I) = \sum\limits_{k=1}^r L(\lambda, I_k)$.
        \smallskip\item $c^{\lambda, I} _{\alpha} = c^{\lambda, \supp(\alpha)} _{\alpha}$ for all $\alpha \in Q$ and $I \supseteq \supp(\alpha)$.
        \smallskip\item $c^{\lambda, I} _{\alpha} \neq 0$ implies that $\supp (\alpha)$ is a connected subset of $I$.
        \smallskip\item $c^{\lambda, I} _{\alpha} \neq 0$ and $\supp(\alpha) = I$ implies $\alpha - \beta(\lambda, I) \in Q^+$.
    \end{enumerate}
\end{proposition}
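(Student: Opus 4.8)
The plan is to reduce everything to two structural facts about $U(\lambda,I)$. Throughout, for $(\lambda,I)\in\cP$ and $w\in W_I$ write $\delta_w := (\lambda+\rho)-w(\lambda+\rho)$, so that $U(\lambda,I)=\sum_{w\in W_I}(-1)^{\ell(w)}e^{-\delta_w}$, and by Proposition~\ref{old_props_from_VV} we have $\delta_w\in Q^+$ with $\supp(\delta_w)=I(w)$. Part~(1) is immediate from $J\subseteq I\subseteq J_\lambda$. The two observations are: \emph{(A)} the coefficient of $e^{-\gamma}$ in $U(\lambda,I)$ equals $\sum_{w\in W_I,\,\delta_w=\gamma}(-1)^{\ell(w)}$, and since $\delta_w=\gamma$ forces $I(w)=\supp(\gamma)$, hence $w\in W_{\supp(\gamma)}$, this coefficient vanishes unless $\supp(\gamma)\subseteq I$, and when $\supp(\gamma)\subseteq I$ it equals the corresponding coefficient of $U(\lambda,\supp(\gamma))$; \emph{(B)} if $I=I_1\dot\cup\cdots\dot\cup I_r$ is the decomposition into connected components, then reflections indexed by distinct components commute and $W_{I_l}$ fixes $\alpha_i$ for $i\in I_k$, $l\ne k$, so $\delta_{w_1\cdots w_r}=\sum_k\delta_{w_k}$ for $w_k\in W_{I_k}$, and as the sign character is multiplicative, $U(\lambda,I)=\prod_{k=1}^r U(\lambda,I_k)$; applying $-\log$ gives part~(3).

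For part~(4): the coefficient of $e^{-\alpha}$ in $-\log U(\lambda,I)$ is a fixed universal polynomial in the coefficients of $e^{-\gamma}$ in $U(\lambda,I)$ over those $\gamma$ with $0<\gamma\le\alpha$; each such $\gamma$ satisfies $\supp(\gamma)\subseteq\supp(\alpha)=:K$, so by~(A) these coefficients already coincide with the corresponding ones of $U(\lambda,K)$, whence $c^{\lambda,I}_\alpha=c^{\lambda,K}_\alpha$. For part~(5): if $c^{\lambda,I}_\alpha\ne0$ then $\alpha\in Q^+\setminus\{0\}$ and $\supp(\alpha)\subseteq I$; by~(4) it suffices to treat $I=\supp(\alpha)$, and then decomposing $I$ into connected components $I_1,\dots,I_r$ and using~(3) gives $c^{\lambda,I}_\alpha=\sum_l c^{\lambda,I_l}_\alpha$. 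Since $L(\lambda,I_l)$ involves only $e^{-\gamma}$ with $\supp(\gamma)\subseteq I_l$, a summand can be nonzero only if $\supp(\alpha)\subseteq I_l$, which is impossible for $r\ge2$; hence $\supp(\alpha)$ is connected.

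The step I expect to be hardest is part~(6). The key lemma I would first prove is the estimate
\[ \delta_w \ \ge\ \beta(\lambda,I(w)) \qquad\text{for all } w\in W_I, \]
by induction on $\ell(w)$: writing $w=w's_i$ reduced we have $w'\alpha_i\in\Delta^+$ and $\delta_w=\delta_{w'}+(\lambda+\rho)(\alpha_i^\vee)\,w'\alpha_i$; if $i\in I(w')$ then $I(w)=I(w')$ and the new term, lying in $Q^+$, is harmless, while if $i\notin I(w')$ then $I(w)=I(w')\sqcup\{i\}$ and, since $w'$ is a product of reflections $s_j$ with $j\ne i$ — each leaving the $\alpha_i$-coefficient untouched — the positive root $w'\alpha_i$ has $\alpha_i$-coefficient $1$, so $w'\alpha_i-\alpha_i\in Q^+$ and the new term dominates $(\lambda+\rho)(\alpha_i^\vee)\alpha_i$, closing the induction against the inductive bound for $\delta_{w'}$. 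Granting this: if $c^{\lambda,I}_\alpha\ne0$ with $\supp(\alpha)=I$, then expanding $-\log(1+v)$ with $v:=\sum_{w\ne1}(-1)^{\ell(w)}e^{-\delta_w}$ yields $\alpha=\delta_{w_1}+\cdots+\delta_{w_m}$ for some $w_1,\dots,w_m\in W_I\setminus\{1\}$; hence $\bigcup_j I(w_j)=\supp(\alpha)=I$, and summing the estimates gives $\alpha\ge\sum_j\beta(\lambda,I(w_j))\ge\beta(\lambda,I)$ because every node of $I$ appears in some $I(w_j)$ and $(\lambda+\rho)(\alpha_i^\vee)>0$ on $I$. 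The delicacy lies entirely in the induction above: in the case split on whether $i\in I(w')$, and in pinning the $\alpha_i$-coefficient of the positive root $w'\alpha_i$ to $1$.

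For part~(2): $L(\lambda,I)=L(\mu,J)\iff U(\lambda,I)=U(\mu,J)$ because $\exp$ and $-\log$ are mutually inverse on formal power series with constant term $1$. If $U(\lambda,I)=U(\mu,J)$, then for each $i\in S$ apply the termwise substitution $x_j\mapsto0$ for all $j\ne i$: only $w$ with $I(w)\subseteq\{i\}$ survive, producing $1-x_i^{\lambda(\alpha_i^\vee)+1}$ when $i\in I$ and $1$ otherwise, and likewise for $(\mu,J)$; comparing forces $I=J$, and then $\lambda(\alpha_i^\vee)=\mu(\alpha_i^\vee)$ for all $i\in I$. Conversely $U(\lambda,I)$ visibly depends only on $I$ and the integers $\lambda(\alpha_i^\vee)$, $i\in I$, so this condition returns $U(\lambda,I)=U(\mu,J)$; and by linear independence of $\Pi$, comparing the (strictly positive) $\alpha_i$-coefficients shows the same condition is equivalent to $\beta(\lambda,I)=\beta(\mu,J)$. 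Chaining these gives the four-term equivalence.
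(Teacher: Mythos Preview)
Your proof is correct and follows essentially the same route as the paper: factoring $U(\lambda,I)$ over connected components for~(3), restricting to $W_{\supp(\alpha)}$ for~(4), and the coefficient estimate on $\delta_w$ for~(6). The only differences are cosmetic---you supply an explicit induction for $\delta_w\ge\beta(\lambda,I(w))$ where the paper cites \cite[Lemma~2(b)]{VV}, and you spell out the single-variable specialization for~(2) that the paper compresses to one line.
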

\begin{proof} \null \quad
    \begin{enumerate} 
        \item Immediate from the definition.\smallskip
        \item Comparing monomials of the form $e^{-k\alpha_i},\; i \in S$ gives the equivalence. \smallskip
        \item Suppose $I = I_1 \cup I_2$ where $ \alpha_{i_1}( \alpha_{i_2} ^\vee ) = 0$ for all $i_1 \in I_1$ and $i_2 \in I_2$. Then $W_I = W_{I_1} \times W_{I_2}$ because the simple reflections $s_{\alpha_{i_1}}$ and $s_{\alpha_{i_2}}$ commute for $i_1 \in I_1$ and $i_2 \in I_2$. For $w \in W_I$, there exists unique $w_1 \in W_{I_1}$ and $w_2 \in W_{I_2}$ such that $w = w_1 w_2$. Therefore we have \begin{align*}
            w(\lambda+\rho) - (\lambda +\rho) &= w_1 w_2(\lambda+\rho) - (\lambda +\rho)\\
            &= w_1 w_2(\lambda+\rho) - w_1(\lambda +\rho)+w_1(\lambda+\rho) - (\lambda +\rho)\\
            &= w_1 [w_2(\lambda+\rho) - (\lambda +\rho)]+ [w_1(\lambda+\rho) - (\lambda +\rho)]\\
        \end{align*} 
        But $\supp(w_2(\lambda+\rho) - (\lambda +\rho)) \subseteq I_2$ and hence $w_1$ fixes it. Therefore we have $$w(\lambda+\rho) - (\lambda +\rho) = w_1(\lambda+\rho) - (\lambda +\rho)+w_2(\lambda+\rho) - (\lambda +\rho)$$
        Hence, 
        \begin{align*}
            U(\lambda,I) &= \sum_{w \in W_I} (-1)^{l(w)} e^{w(\lambda + \rho) - (\lambda + \rho)} \quad \quad
            = \sum_{(w_1,w_2) \in W_{I_1}\times W_{I_2}} (-1)^{l(w_1 w_2)} e^{w_1w_2(\lambda + \rho) - (\lambda + \rho)}\\
            &= \left(\sum_{w_1 \in W_{I_1}} (-1)^{l(w_1)} e^{w_1(\lambda + \rho) - (\lambda + \rho)}\right) \left(\sum_{w_2 \in W_{I_2}} (-1)^{l(w_2)}e^{w_2(\lambda + \rho) - (\lambda + \rho)}\right)\\
            &= U(\lambda, I_1) \cdot U(\lambda, I_2)
        \end{align*}
        Now taking $-log$ on both sides, we get $$L(\lambda, I) = L(\lambda, I_1) + L(\lambda, I_2)$$
        \item Fix $\alpha \in Q.$ Let  $\psi := U(\lambda, I) -1 -\zeta$ where $$\zeta := \sum _{w \in W_{\supp(\alpha)} \backslash \{e\}} (-1)^{l(w)} e^{w(\lambda+\rho) - (\lambda+\rho)}$$
\noindent
        Then $L(\lambda, I) = \sum_{k \geq 1} (-1)^k (\zeta + \psi)^k / k$. 
 But since the support of any monomial in $\psi$ is not a subset of $\supp(\alpha)$, it follows that $e^{-\alpha}$ has contributions only from $\sum_{k \geq 1} (-1)^k \zeta ^k / k = L(\lambda, \supp(\alpha))$. Therefore $c^{\lambda, I} _{\alpha} = c^{\lambda, \supp(\alpha)} _{\alpha}$. 
 \smallskip
        \item The fact that $\supp(\alpha) \subseteq I$ follows from part (2) of Proposition~\ref{old_props_from_VV}. 
        Now, suppose that $\supp(\alpha)$ is disconnected. Let $\supp(\alpha) = I_1 \dot \cup I_2$, where $I_1$ and $I_2$ are proper non-empty subsets of $\supp(\alpha)$ and  $\alpha_{i_1}(\alpha_{i_2}^\vee) = 0$ for all $i_1 \in I_1$ and $i_2 \in I_2$. By part (3) of this proposition, we have $L(\lambda, \supp(\alpha)) = L(\lambda, I_1) + L(\lambda, I_2)$. Therefore, $c^{\lambda, I} _{\alpha} = c^{\lambda, \supp(\alpha)} _{\alpha} = c^{\lambda, I_1} _{\alpha} + c^{\lambda, I_2} _{\alpha}$. But since $\supp(\alpha)$ is not a subset of $I_1$ or $I_2$ we have $c^{\lambda, I_1} _{\alpha} = c^{\lambda, I_2} _{\alpha} = 0$. 
        \smallskip
        \item The coefficient of $\alpha_i$ in $\lambda+\rho - w(\lambda + \rho)$ is either $0$ or greater than or equal to $(\lambda+\rho) (\alpha _i^\vee)$ for all $i \in S$. This fact is elementary to prove. For example, see \cite[Lemma 2(b)]{VV} where it is proved by induction on the length of $w$. Now this means that the variable $e^{-\alpha_i}$ has degree equal to $0$ or greater than $(\lambda+\rho)(\alpha_i ^\vee)$ in any monomial in $U(\lambda, I)$. But since any monomial in $L(\lambda, I)$ is a product of monomials from $U(\lambda, I)$, the proof follows. 
    \end{enumerate}
\end{proof}
The following proposition is very crucial and follows from Propositions~3 and 7 of \cite{VV} (also see part (4) of Proposition~\ref{prop_new}, see also Exercise 1.2 in \cite{Kac90}).
We give a sketch of the proof for reader's convenience.
\begin{proposition} \label{prop_imp} Let $(\lambda, I)\in \mathcal{P}$ as before.
    \begin{enumerate}
        \item $c^{\lambda, I} _{\beta(\lambda, I)}$ is independent of $\lambda$.
        i.e., $c^{\lambda, I} _{\beta(\lambda, I)} = c^{\mu, I} _{\beta(\mu, I)}$ if $(\mu, I) \in \mathcal{P}$ with $I \subseteq J_{\lambda} \cap J_{\mu}$.
        \item $c^{\lambda, I} _{\beta(\lambda, J)} >0$ for any non-empty, connected subset $J$ of $I$.
        \item In particular, if $(\lambda, I) \in \cC$ then $c^{\lambda, I} _{\beta(\lambda, I)} >0$.
    \end{enumerate}
\end{proposition}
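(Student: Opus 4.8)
The plan is to reduce all three parts to the case of a connected $I$, to pin down the coefficient $c^{\lambda,I}_{\beta(\lambda,I)}$ combinatorially (which makes its independence of $\lambda$ transparent, giving (1)), and then to read off positivity by evaluating at $\lambda=0$, where the Weyl--Kac denominator identity for the Kac--Moody subalgebra attached to $I$ gives an explicit answer (giving (2) and (3)). I expect the only nonformal step to be the combinatorial input that identifies exactly which $w\in W_I$ satisfy $\gamma_w:=(\lambda+\rho)-w(\lambda+\rho)=\beta(\lambda,I(w))$; this is essentially the content of \cite[Propositions 3 and 7]{VV} together with Proposition~\ref{prop_new}(4) (cf.\ also \cite[Exercise 1.2]{Kac90}).

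\emph{Reduction to connected $I$.} Part (3) is the case $J=I$ of part (2). For (2), since $i\in J\subseteq J_\lambda$ forces $(\lambda+\rho)(\alpha_i^\vee)=\lambda(\alpha_i^\vee)+1\ge 1$, we have $\supp\beta(\lambda,J)=J$, so Proposition~\ref{prop_new}(4) gives $c^{\lambda,I}_{\beta(\lambda,J)}=c^{\lambda,J}_{\beta(\lambda,J)}$; thus it suffices to show $c^{\lambda,I}_{\beta(\lambda,I)}>0$ for $(\lambda,I)\in\cC$. For (1), if $I$ is disconnected then $\supp\beta(\lambda,I)=I$ is disconnected and both sides vanish by Proposition~\ref{prop_new}(5), so there too I may assume $I$ connected.

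\emph{A closed form, and part (1).} Expanding $L(\lambda,I)=\sum_{k\ge 1}\frac{(-1)^k}{k}\big(\sum_{e\ne w\in W_I}(-1)^{\ell(w)}e^{-\gamma_w}\big)^k$, a tuple $(w_1,\dots,w_k)$ contributes to the coefficient of $e^{-\beta(\lambda,I)}$ exactly when $\gamma_{w_1}+\cdots+\gamma_{w_k}=\sum_{i\in I}(\lambda+\rho)(\alpha_i^\vee)\alpha_i$. Since each coefficient of $\alpha_i$ in $\gamma_w$ is $0$ or $\ge(\lambda+\rho)(\alpha_i^\vee)$ (the estimate \cite[Lemma 2(b)]{VV} used in the proof of Proposition~\ref{prop_new}(6)), comparing the coefficient of each $\alpha_i$ forces the supports $I(w_j)$ to partition $I$ with $\gamma_{w_j}=\beta(\lambda,I(w_j))$ for every $j$. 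Using the telescoping identity $\gamma_w=\sum_t(\lambda+\rho)(\alpha_{i_t}^\vee)\beta_t$ for a reduced word $w=s_{i_1}\cdots s_{i_\ell}$, where $\beta_t=s_{i_1}\cdots s_{i_{t-1}}(\alpha_{i_t})$ runs over the inversion set of $w$ (which is closed under root sums), a height count shows --- independently of $\lambda$ --- that $\gamma_w=\beta(\lambda,I(w))$ precisely when $w=\prod_{i\in K}s_i$ for an independent set $K=I(w)$ of the Dynkin diagram. Hence the contributing tuples are the ordered partitions of $I$ into nonempty independent subsets, each $w_j$ contributing sign $(-1)^{|I(w_j)|}$, so $(-1)^{\ell(w_1)+\cdots+\ell(w_k)}=(-1)^{|I|}$ and
\[
c^{\lambda,I}_{\beta(\lambda,I)}=(-1)^{|I|}\sum_{k\ge 1}\frac{(-1)^k}{k}\,P_k(I),
\]
where $P_k(I)$ is the number of ordered partitions of $I$ into $k$ nonempty independent subsets. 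This depends only on the Dynkin diagram induced on $I$, proving (1); in particular $c^{\lambda,I}_{\beta(\lambda,I)}=c^{0,I}_{\beta(0,I)}$.

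\emph{Positivity, and parts (2),(3).} At $\lambda=0$ we have $\beta(0,I)=\sum_{i\in I}\alpha_i$ and $U(0,I)=\sum_{w\in W_I}(-1)^{\ell(w)}e^{w\rho-\rho}$; since $w\rho-\rho$ for $w\in W_I$ depends only on the values $\rho(\alpha_i^\vee)=1$ $(i\in I)$, the Weyl--Kac denominator identity for $\mathfrak g_I:=\mathfrak g((a_{ij})_{i,j\in I})$ yields $U(0,I)=\prod_{\alpha\in\Delta_+(\mathfrak g_I)}(1-e^{-\alpha})^{\dim(\mathfrak g_I)_\alpha}$, so $L(0,I)=\sum_\alpha\dim(\mathfrak g_I)_\alpha\sum_{m\ge 1}\frac1m e^{-m\alpha}$ has non-negative coefficients. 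As $\sum_{i\in I}\alpha_i$ has every $\alpha_i$-coefficient equal to $1$, the only $(\alpha,m)$ with $m\alpha=\sum_{i\in I}\alpha_i$ is $m=1$, so $c^{0,I}_{\beta(0,I)}=\dim(\mathfrak g_I)_{\sum_{i\in I}\alpha_i}$, and this is $\ge 1$ because $\sum_{i\in I}\alpha_i$ is a root of $\mathfrak g_I$: ordering the connected set $I$ as $i_1,\dots,i_{|I|}$ with each $i_t$ adjacent to some earlier $i_s$, one checks inductively that if $\xi$ is a root supported on $\{i_1,\dots,i_{t-1}\}$ and $i_t$ is adjacent to $\supp\xi$ then $\xi(\alpha_{i_t}^\vee)<0$, whence $\xi+\alpha_{i_t}\in\Delta$ by the root-string property. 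Therefore $c^{0,I}_{\beta(0,I)}>0$, and by the previous step the same holds for all admissible $\lambda$, which gives (2) and (3).
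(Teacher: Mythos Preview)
Your proof is correct and follows essentially the same two-step strategy as the paper: first derive the combinatorial formula $c^{\lambda,I}_{\beta(\lambda,I)}=(-1)^{|I|}\sum_{k\ge1}\frac{(-1)^k}{k}P_k(I)$ (ordered partitions of $I$ into independent sets) to get $\lambda$-independence, then specialize to $\lambda=0$ and use the Weyl--Kac denominator identity for $\mathfrak g_I$ to identify $c^{0,I}_{\beta(0,I)}$ with the multiplicity of $\sum_{i\in I}\alpha_i$. Your write-up is somewhat more self-contained than the paper's: where the paper cites \cite{VV} and \cite[Lemma~1.6]{Kac90}, you supply the inversion-set/height argument pinning down the $w$ with $\gamma_w=\beta(\lambda,I(w))$ and the root-string induction showing $\sum_{i\in I}\alpha_i\in\Delta$ when $I$ is connected.
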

\begin{proof} 
    \begin{enumerate}
        \item Consider the subgraph $\mathcal{G}_I$ of the Dynkin diagram of $\mathfrak g$ induced by $I$. We then have $$c^{\lambda, I} _{\beta(\lambda,I)} = (-1)^{|I|} \sum_{k \geq 1} \frac{(-1)^{k} \; |\mathcal{P}_k(\mathcal{G}_I)|}{k} $$ where $\mathcal{P}_k (\mathcal{G}_I)$ is the set of all $k$-tuples $(J_1, \ldots, J_k)$ of pairwise disjoint subsets of $I$ such that 
        \begin{enumerate}[(a)]
            \item $J_1 \; \dot\cup \; \cdots \; \dot\cup \; J_k = I$
            \item $J_i$ is totally disconnected (i.e., $\forall x, y \in J_i$ there is no edge between $x$ and $y$ in $\mathcal{G}_I$).
        \end{enumerate}
        The RHS is clearly independent of $\lambda$, see \cite[\S 4.3]{VV} for more details.
        \smallskip\item By (1), it is enough to consider $\lambda = 0$. By the Weyl denominator identity (for the parabolic subalgebra $\mathfrak{g} _I$) we have $$U(0, I) = \prod_{\alpha \in \Delta_+ (I)} (1-e^{-\alpha})^{\mathrm{mult}(\alpha)}$$ where, $\Delta_+ (I) = \mathbb{Z}\text{-span}\{\alpha_i : i \in I\} \cap \Delta _+$ or equivalently the set of positive roots for the parabolic subalgebra $\mathfrak g _I$. Applying $-log$ we get $$L(0,I) = \sum_{\alpha \in \Delta_+ (I)} \mathrm{mult}(\alpha) \sum_{k \geq 1} \frac{e^{-k\alpha}}{k}$$
        The coefficient of $\beta(0, I) = \sum_{i \in I} \alpha_i$ in $L(0,I)$ is then the multiplicity of $\sum_{i \in I} \alpha_i$. But since $I$ is connected we have this multiplicity to be a positive integer (see \cite[Lemma 1.6]{Kac90} and \cite[Proposition 4 \& 7]{VV}).
        \smallskip\item Follows immediately from (2).
    \end{enumerate}
\end{proof}
\section{Unique factorization for Parabolic Verma Modules}\label{section4}
In this section, we will prove the unique factorization of tensor products of parabolic Verma modules of $\mathfrak g.$
First, we analyze when a sum of finitely many $L(\lambda, I)$'s can be equal to another such sum. 
\subsection{}
The following relation $\succeq$ on $\cP$ will play an important role in this paper. Define $(\lambda, I) \succeq (\mu, J)$ if:

\begin{center}either $I \supsetneq J$
\quad \quad or \quad \quad $I = J$ and $\beta(\mu, J) - \beta(\lambda, I) \in Q^+$.
\end{center}

\noindent Observe that the latter part of this condition may be exchanged with 
\begin{center}
$I=J$ and $\lambda(\alpha_i ^\vee)\leq \mu(\alpha_i ^\vee) \; \; \text{for all} \; i \in I$.\end{center}
Note that this relation is reflexive, and transitive but not anti-symmetric. i.e., $(\lambda, I) \succeq (\mu, J)$ and $(\mu,J) \succeq (\lambda, I)$ does not imply that $(\lambda, I) = (\mu, J)$. 

\medskip
For $(\lambda, I)$ and $(\mu, J) \in \cP$ we write $(\lambda, I) \approx (\mu, J)$ if $I = J$ and  $\lambda(\alpha_i ^{\vee}) = \mu(\alpha_i ^{\vee})$ for all $i \in I$. This defines an equivalence relation on $\cP$.
Observe that $(\lambda, I) \approx (\mu, J)$ means that these pairs satisfy the equivalent conditions of part (2) of Proposition~\ref{prop_new}. The relation $\succeq$ now defines a partial order on $\cP/\!\approx$.

\medskip
\textbf{Caveat:} Even though $\succeq$ does not form a partial order on $\cP$ we will find it convenient nevertheless to talk about {\em maximal elements} in a subset of $\cP$. What we actually mean by saying $(\lambda_1, I_1)$ is maximal among   $\{(\lambda_1, I_1),$ $(\lambda_2, I_2),$ $\ldots,$ $(\lambda_k, I_k)\}$ is that:
\begin{center}
    if $(\lambda_j, I_j) \succeq (\lambda_1, I_1)$ for some $j$ then $(\lambda_1, I_1) \approx (\lambda_j, I_j)$
\end{center}
or equivalently, when thought of as elements of $\cP /\!\!\approx$, $(\lambda_1, I_1)$ is maximal among   $\{(\lambda_1, I_1),$ $(\lambda_2, I_2),$ $\ldots,$ $(\lambda_k, I_k)\}$.
\begin{lemma}\label{main_lemma}
    Let $r\geq 0$. Let $(\lambda_1, I_1), \ldots, (\lambda_r, I_r)$ be (not necessarily distinct) elements of $\cC$ and let $L:= \sum_{k=1} ^r L(\lambda_k, I_k)$. 
    \begin{enumerate}
     \item For $(\mu, J) \in \cP$, if the coefficient of $e^{-\beta(\mu, J)}$ in $L$ is non-zero, then $(\lambda_j, I_j) \succeq (\mu, J)$ for some $1 \leq j \leq r$.
     \smallskip\item If $(\lambda_j, I_j)$ is a maximal element with respect to $\succeq$ among $(\lambda_1, I_1), \ldots, (\lambda_r, I_r)$ then the coefficient of $e^{-\beta(\lambda_j, I_j)}$ in $L$ is positive. In particular, $L \neq 0$ if $r \neq 0$.
    \end{enumerate}
\end{lemma}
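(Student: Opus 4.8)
The plan is to deduce both statements from the single expansion
$\left[e^{-\beta(\mu,J)}\right]L=\sum_{k=1}^{r}c^{\lambda_k,I_k}_{\beta(\mu,J)}$
(writing $[e^{-\gamma}]$ for the operation of extracting the coefficient of $e^{-\gamma}$), combined with the structural facts in Propositions~\ref{prop_new} and~\ref{prop_imp}. The one preliminary observation I would record first is that $\supp(\beta(\nu,K))=K$ for every $(\nu,K)\in\cP$: indeed $\beta(\nu,K)=\sum_{i\in K}(\nu+\rho)(\alpha_i^\vee)\alpha_i$, and since $K\subseteq J_\nu$ and $\rho(\alpha_i^\vee)=1$, each coefficient $(\nu+\rho)(\alpha_i^\vee)$ is a positive integer. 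I would also note $c^{\lambda,I}_0=0$ (the constant term of $U(\lambda,I)$ is $1$), so that the degenerate case $J=\emptyset$ in part~(1) is vacuous.

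For part~(1), the argument I have in mind is: assume $\left[e^{-\beta(\mu,J)}\right]L\neq 0$, so $c^{\lambda_j,I_j}_{\beta(\mu,J)}\neq 0$ for some $j$. By Proposition~\ref{prop_new}(5), $\supp(\beta(\mu,J))$ is a connected subset of $I_j$, which by the preliminary observation says $J\subseteq I_j$. If $J\subsetneq I_j$ then $(\lambda_j,I_j)\succeq(\mu,J)$ by the first alternative in the definition of $\succeq$. If $J=I_j$, then $\supp(\beta(\mu,J))=I_j$, so Proposition~\ref{prop_new}(6) gives $\beta(\mu,J)-\beta(\lambda_j,I_j)\in Q^+$, which is precisely $(\lambda_j,I_j)\succeq(\mu,J)$ under the second alternative. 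Either way we obtain the claim.

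For part~(2), let $(\lambda_j,I_j)$ be maximal (in the sense of the Caveat) among the given pairs, and examine $\left[e^{-\beta(\lambda_j,I_j)}\right]L=\sum_{k=1}^{r}c^{\lambda_k,I_k}_{\beta(\lambda_j,I_j)}$. I would show every nonzero summand equals $c^{\lambda_j,I_j}_{\beta(\lambda_j,I_j)}$. If $c^{\lambda_k,I_k}_{\beta(\lambda_j,I_j)}\neq 0$ then, exactly as above, $I_j=\supp(\beta(\lambda_j,I_j))$ is a connected subset of $I_k$; were $I_j\subsetneq I_k$ we would get $(\lambda_k,I_k)\succeq(\lambda_j,I_j)$ with $I_k\supsetneq I_j$, contradicting maximality (which forces $(\lambda_k,I_k)\approx(\lambda_j,I_j)$, hence $I_k=I_j$). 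So $I_k=I_j$; then $\supp(\beta(\lambda_j,I_j))=I_k$, and Proposition~\ref{prop_new}(6) yields $\beta(\lambda_j,I_j)-\beta(\lambda_k,I_k)\in Q^+$, i.e. $(\lambda_k,I_k)\succeq(\lambda_j,I_j)$; maximality again forces $(\lambda_k,I_k)\approx(\lambda_j,I_j)$, so $\beta(\lambda_k,I_k)=\beta(\lambda_j,I_j)$, and finally Proposition~\ref{prop_imp}(1) gives $c^{\lambda_k,I_k}_{\beta(\lambda_j,I_j)}=c^{\lambda_k,I_k}_{\beta(\lambda_k,I_k)}=c^{\lambda_j,I_j}_{\beta(\lambda_j,I_j)}$. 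Since $(\lambda_j,I_j)\in\cC$, Proposition~\ref{prop_imp}(3) gives $c^{\lambda_j,I_j}_{\beta(\lambda_j,I_j)}>0$, and the $k=j$ term is one such summand, so $\left[e^{-\beta(\lambda_j,I_j)}\right]L>0$. For the final assertion, when $r\neq 0$ the images of $(\lambda_1,I_1),\dots,(\lambda_r,I_r)$ in the poset $\cP/\!\approx$ form a nonempty finite set, hence have a maximal element; lifting it back and applying the above shows some coefficient of $L$ is strictly positive, so $L\neq 0$.

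The main obstacle is not any single hard step but rather organizing the case analysis correctly around the expansion of $\left[e^{-\beta(\mu,J)}\right]L$; the genuinely delicate point is the ``$I_k=I_j$ with $\lambda_k\neq\lambda_j$'' terms in part~(2), where one cannot directly invoke positivity of $c^{\lambda_k,I_k}_{\beta(\lambda_j,I_j)}$. The remedy is to use maximality twice — first to exclude $I_j\subsetneq I_k$, then to upgrade $(\lambda_k,I_k)\succeq(\lambda_j,I_j)$ to $\approx$-equivalence — and only then to appeal to the $\lambda$-independence of the leading coefficient in Proposition~\ref{prop_imp}(1). It is also essential that ``maximal'' be read in the $\cP/\!\approx$ sense of the Caveat, as that is what guarantees both the existence of a maximal element among finitely many pairs and the validity of the two maximality deductions.
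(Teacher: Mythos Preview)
Your proof is correct and follows essentially the same approach as the paper's. The only cosmetic difference is that in part~(2) the paper applies part~(1) directly to the single summand $L(\lambda_k,I_k)$ to obtain $(\lambda_k,I_k)\succeq(\lambda_j,I_j)$, whereas you re-derive that comparison explicitly; your additional remarks on $\supp(\beta(\nu,K))=K$ and the degenerate $J=\emptyset$ case simply make explicit what the paper leaves implicit.
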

\begin{proof} \null \;
    \begin{enumerate} 
        \item By hypothesis we have $c^{\lambda_j, I_j} _{\beta(\mu, J)} \neq 0$ for some $1 \leq j \leq r$. By Proposition~\ref{prop_new} (5), we have $J \subseteq I_j$. If $J \subsetneq I_j$ then $(\lambda_j, I_j) \succeq (\mu, J)$. Suppose $J = I_j$. By part (4) and (6) of Proposition~\ref{prop_new} we have $\beta(\mu, J) - \beta(\lambda, J) \in Q^+$. Therefore $(\lambda_j, I_j) \succeq (\mu, J)$. 
        \smallskip\item By part (1) and maximality of $(\lambda_j, I_j)$, if the coefficient of $e^{-\beta(\lambda_j, I_j)}$ in $L(\lambda_k, I_k)$ is non-zero then $(\lambda_j, I_j) \approx (\lambda_k, I_k)$.  In such a case, $\beta(\lambda_j, I_j) = \beta(\lambda_k, I_k) $ and hence Proposition~\ref{prop_imp} implies that the coefficient of $e^{-\beta(\lambda_j, I_j)}$ in $L(\lambda_k, I_k)$ is positive.
    \end{enumerate}
\end{proof}
\begin{theorem}\label{mainthm_unspl}
    Let $r,s \geq 0$. Let $(\lambda_1,I_1)$, $(\lambda_2, I_2),\ldots, (\lambda_r, I_r)$ and $(\mu_1, J_1)$, $(\mu_2, J_2), $ $\ldots,$ $ (\mu_s,J_s) \in \cC$. We have
    \begin{equation}\label{eq_unspl_mainthm}
        \sum_{k=1} ^{r} L(\lambda_k, I_k) = \sum_{k=1} ^{s} L(\mu_k, J_k)
    \end{equation} if and only if
     $r=s$ and there exists a permutation $\sigma \in \mathfrak S _r$ such that $(\lambda_k, I_k) \approx (\mu_{\sigma(k)}, J_{\sigma(k)})$ for $1 \leq k \leq r$.
\end{theorem}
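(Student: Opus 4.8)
The plan is to prove the nontrivial (``only if'') direction by induction on $r+s$, peeling off one entire $\approx$-class of factors at each stage via a single coefficient comparison. The ``if'' direction is immediate: if $(\lambda_k, I_k) \approx (\mu_{\sigma(k)}, J_{\sigma(k)})$ then $L(\lambda_k, I_k) = L(\mu_{\sigma(k)}, J_{\sigma(k)})$ by Proposition~\ref{prop_new}(2), and summing over $k$ gives \eqref{eq_unspl_mainthm}. For the base case of the induction, when $r=0$ the right-hand side of \eqref{eq_unspl_mainthm} vanishes, so Lemma~\ref{main_lemma}(2), applied to the right-hand side, forces $s=0$; the case $s=0$ is symmetric.

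So assume $r,s\geq 1$. Since $\succeq$ induces a genuine partial order on the finite set $\{(\lambda_k,I_k)\}_{k=1}^r\cup\{(\mu_k,J_k)\}_{k=1}^s$ modulo $\approx$, this set has a $\succeq$-maximal element; after possibly interchanging the two sides of \eqref{eq_unspl_mainthm} and relabelling, I may assume it is $(\lambda_1,I_1)$, maximal (in the sense of the Caveat) among all $r+s$ pairs. Since $(\lambda_1,I_1)\in\cC$, every integer $(\lambda_1+\rho)(\alpha_i^\vee)$ with $i\in I_1$ is positive, so $\supp\beta(\lambda_1,I_1)=I_1$.

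The crux is to read off the coefficient of $e^{-\beta(\lambda_1,I_1)}$ on both sides of \eqref{eq_unspl_mainthm}. Arguing exactly as in the proof of Lemma~\ref{main_lemma} --- using $\supp\beta(\lambda_1,I_1)=I_1$ together with parts (4), (5), (6) of Proposition~\ref{prop_new} --- one sees that $e^{-\beta(\lambda_1,I_1)}$ has nonzero coefficient in $L(\lambda_k,I_k)$ only if $(\lambda_k,I_k)\succeq(\lambda_1,I_1)$, and likewise for $L(\mu_k,J_k)$. By maximality of $(\lambda_1,I_1)$ this forces $(\lambda_k,I_k)\approx(\lambda_1,I_1)$ (resp.\ $(\mu_k,J_k)\approx(\lambda_1,I_1)$); in that case $\beta$ agrees with $\beta(\lambda_1,I_1)$, and by Proposition~\ref{prop_imp}(1),(3) the coefficient equals the fixed positive number $c:=c^{\lambda_1,I_1}_{\beta(\lambda_1,I_1)}>0$. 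Hence the coefficient of $e^{-\beta(\lambda_1,I_1)}$ is $c\,m_\lambda$ on the left and $c\,m_\mu$ on the right, where $m_\lambda=\#\{k:(\lambda_k,I_k)\approx(\lambda_1,I_1)\}\geq 1$ and $m_\mu=\#\{k:(\mu_k,J_k)\approx(\lambda_1,I_1)\}$. Since $c\neq 0$, we get $m_\lambda=m_\mu=:m\geq 1$.

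To finish, set $A=\{k:(\lambda_k,I_k)\approx(\lambda_1,I_1)\}$ and $B=\{k:(\mu_k,J_k)\approx(\lambda_1,I_1)\}$, each of size $m$. By Proposition~\ref{prop_new}(2) one has $L(\lambda_k,I_k)=L(\lambda_1,I_1)$ for $k\in A$ and $L(\mu_k,J_k)=L(\lambda_1,I_1)$ for $k\in B$, so subtracting $m\,L(\lambda_1,I_1)$ from both sides of \eqref{eq_unspl_mainthm} yields $\sum_{k\notin A}L(\lambda_k,I_k)=\sum_{k\notin B}L(\mu_k,J_k)$, an identity of the same form with $r-m<r$ and $s-m<s$ factors, all still in $\cC$. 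The induction hypothesis gives $r-m=s-m$, hence $r=s$, together with a bijection $\tau\colon\{1,\dots,r\}\setminus A\to\{1,\dots,s\}\setminus B$ with $(\lambda_k,I_k)\approx(\mu_{\tau(k)},J_{\tau(k)})$; extending $\tau$ by an arbitrary bijection $A\to B$ and using transitivity of $\approx$ produces the required $\sigma\in\mathfrak S_r$. I expect the one delicate point to be the coefficient-extraction step: making precise (via Proposition~\ref{prop_new}(4)--(6) and $\supp\beta(\lambda_1,I_1)=I_1$) that a factor contributes to the coefficient of $e^{-\beta(\lambda_1,I_1)}$ only when it $\succeq$-dominates $(\lambda_1,I_1)$, and then upgrading ``nonzero contribution'' to ``the same positive contribution $c$ on both sides'' using the $\lambda$-independence of $c^{\lambda,I}_{\beta(\lambda,I)}$ in Proposition~\ref{prop_imp}(1); everything else is a routine induction.
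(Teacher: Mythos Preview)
Your proof is correct and follows essentially the same approach as the paper's: pick a $\succeq$-maximal pair among all $r+s$ factors, compare the coefficient of $e^{-\beta(\lambda_1,I_1)}$ on both sides via Lemma~\ref{main_lemma} and Proposition~\ref{prop_imp}, and induct. The only cosmetic difference is that you cancel an entire $\approx$-class at once (using the exact multiplicity count $c\,m_\lambda=c\,m_\mu$) and induct on $r+s$, whereas the paper cancels a single matching pair at a time and inducts on $\min\{r,s\}$.
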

\begin{proof}
The reverse implication easily follows from part(2) of Proposition~\ref{prop_new}. We prove the forward implication by
 induction on $m:= \min\{r,s\}$. If $m=0$, then $r=s=0$ follows from (2) of Lemma~\ref{main_lemma}.
Suppose $m \geq 1$. Without loss of generality assume that $(\lambda_1, I_1)$ is maximal among $\{(\lambda_1, I_1)$ $, \cdots, (\lambda_r, I_r), (\mu_1, J_1), \cdots, (\mu_s, J_s) \}$ viewed as elements of $\cP / \approx$. By (2) of Lemma~\ref{main_lemma}, we see that the coefficient of $e^{-\beta(\lambda_1, I_1)}$ is non-zero in the left-hand side of the Eq.~\eqref{eq_unspl_mainthm}, and therefore also on the right-hand side. 
Now by (1) of Lemma~\ref{main_lemma} there exists $k$ such that $(\mu_k, J_k) \succeq (\lambda_1, I_1)$. But by maximality of $(\lambda_1, I_1)$ we conclude that $(\mu_k, J_k) \approx (\lambda_1, I_1)$. Therefore we may cancel $L(\lambda_1, I_1) = L(\mu_k, J_k)$ from both sides of Eq.~\eqref{eq_unspl_mainthm}, thereby reducing the value of $m$ by $1$.
\end{proof}
\begin{corollary}\label{corollary_unspl_ufpvm}
    Let $(\lambda_1,I_1)$, $(\lambda_2, I_2), \ldots, (\lambda_r, I_r)$ and $(\mu_1, J_1)$, $(\mu_2, J_2), \ldots, (\mu_r,J_r) \in \cP$ such that  all $I_k, J_k$ are connected (possibly empty). Then
    \begin{equation}\label{eq2_unspl_mainthm_cor}
        \prod_{k=1} ^{r} ch_{\mathfrak h} (M(\lambda_k, I_k)) = \prod_{k=1} ^{r} ch_{\mathfrak h} (M(\mu_k, J_k))
    \end{equation} if and only if      $\sum_{k=1} ^r \lambda_k = \sum_{k=1} ^r \mu_k$ and there exists
         $\sigma \in \mathfrak S _r$ such that $(\lambda_k, I_k) \approx (\mu_{\sigma(k)}, J_{\sigma(k)})$ for all $1\le k\le r$.
\end{corollary}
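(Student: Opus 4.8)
The plan is to convert the multiplicative identity \eqref{eq2_unspl_mainthm_cor} into the additive identity \eqref{eq_unspl_mainthm} of Theorem~\ref{mainthm_unspl}, the only subtlety being that the corollary permits the connected subsets $I_k, J_k$ to be empty, whereas Theorem~\ref{mainthm_unspl} is stated for elements of $\cC$ (nonempty connected $I$).

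First I would use $\mathrm{ch}_{\mathfrak h}\,M(\lambda, I) = e^{\lambda}\,U(\lambda, I)/U(0,S)$ to rewrite \eqref{eq2_unspl_mainthm_cor}, after cancelling the common factor $U(0,S)^{-r}$, as
$$e^{\sum_{k}\lambda_k}\prod_{k=1}^{r}U(\lambda_k, I_k) = e^{\sum_{k}\mu_k}\prod_{k=1}^{r}U(\mu_k, J_k).$$
By Proposition~\ref{old_props_from_VV}, each factor $U(\nu, K)$ is a power series in the variables $e^{-\alpha_i}$ with constant term $1$; hence, in the usual partial order on $\mathfrak h^*$ given by $\gamma \le \gamma' \iff \gamma' - \gamma \in Q^+$, the unique maximal weight occurring on each side is the exponent of the prefactor, with coefficient $1$. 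Comparing these gives $\sum_k \lambda_k = \sum_k \mu_k$. Cancelling this exponential and then taking $-\log$ of both sides (each is now a power series with constant term $1$) yields
$$\sum_{k=1}^{r} L(\lambda_k, I_k) = \sum_{k=1}^{r} L(\mu_k, J_k).$$

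Next, since $U(\nu, \emptyset) = 1$ and hence $L(\nu, \emptyset) = 0$, the summands with $I_k = \emptyset$ (resp. $J_k = \emptyset$) drop out, leaving an identity among finitely many $L(\cdot,\cdot)$ with all subsets lying in $\cC$. Theorem~\ref{mainthm_unspl} then gives that the number of indices $k$ with $I_k \neq \emptyset$ equals the number with $J_k \neq \emptyset$, together with a bijection between these two index sets matching the corresponding tuples up to $\approx$. Since the total number of factors is $r$ on both sides, the numbers of empty $I_k$'s and empty $J_k$'s agree as well; matching the empty ones in any manner (two tuples $(\lambda, \emptyset)$ and $(\mu, \emptyset)$ are vacuously $\approx$) extends the bijection to a permutation $\sigma \in \mathfrak S_r$ with $(\lambda_k, I_k) \approx (\mu_{\sigma(k)}, J_{\sigma(k)})$ for all $k$. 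Conversely, if $\sum_k \lambda_k = \sum_k \mu_k$ and $(\lambda_k, I_k) \approx (\mu_{\sigma(k)}, J_{\sigma(k)})$ for all $k$, then Proposition~\ref{prop_new}(2) gives $U(\lambda_k, I_k) = U(\mu_{\sigma(k)}, J_{\sigma(k)})$, so the two products of $U$'s coincide, and multiplying by $e^{\sum \lambda_k} = e^{\sum \mu_k}$ and dividing by $U(0,S)^r$ recovers \eqref{eq2_unspl_mainthm_cor}.

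I expect no serious obstacle here. The only points requiring a little care are that one must strip off the exponential prefactors by a highest-weight comparison \emph{before} passing to logarithms (the raw products are not power series with constant term $1$), and the minor bookkeeping needed to absorb the empty-subset factors into a genuine permutation of $\{1, \ldots, r\}$.
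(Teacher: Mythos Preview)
Your proof is correct and follows essentially the same route as the paper's: rewrite via the character formula, cancel $U(0,S)^{-r}$, compare highest weights to extract $\sum\lambda_k=\sum\mu_k$, reduce to $\prod U(\lambda_k,I_k)=\prod U(\mu_k,J_k)$, drop the trivial $I_k=\emptyset$ factors (where $U=1$), take logarithms, apply Theorem~\ref{mainthm_unspl}, and extend the resulting bijection to a permutation of $\{1,\ldots,r\}$ using $(\lambda,\emptyset)\approx(\mu,\emptyset)$. The only cosmetic difference is that the paper strips the empty-$I$ factors before taking $\log$ whereas you do so after, which is immaterial.
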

\begin{proof}
We will prove only the forward direction, the converse following easily from Proposition~\ref{prop_new}.
    Rewriting \eqref{eq2_unspl_mainthm_cor} using the character formula for parabolic Verma modules, we get
    \begin{equation}\label{eqn-wcf}
        \prod _{k=1} ^r e^{\lambda_k}\frac{U(\lambda_k, I_k)}{U(0, S)} = \prod _{k=1} ^r e^{\mu_k}\frac{U(\mu_k, J_k)}{U(0, S)}
    \end{equation}
    Comparing the highest weights on both sides of \eqref{eq2_unspl_mainthm_cor}, we get $\sum_{k = 1} ^r \lambda_k = \sum_{k=1} ^r \mu_k$. Therefore \eqref{eqn-wcf} gives: 
    \begin{equation*}
        \prod _{k=1} ^r U(\lambda_k, I_k) = \prod _{k=1} ^r U(\mu_k, J_k)
    \end{equation*}
    Note that $U(\lambda, I) = 1$ iff $I = \emptyset$. Ignoring such trivial terms in the above product on both sides we have up to a relabelling 
    \begin{equation*}
        \prod _{k=1} ^t U(\lambda_k, I_k) = \prod _{k=1} ^s U(\mu_k, J_k)
    \end{equation*}
    where, now $\{(\lambda_1, I_1), \ldots, (\lambda_t, I_t), (\mu_1, J_1), \ldots, (\mu_s, J_s)\} \subseteq \cC$.
Taking log on both sides, and applying Theorem~\ref{mainthm_unspl} we get $t=s$ and there is a permutation $\sigma \in \mathfrak S_t$ such that $(\lambda_k, I_k) \approx (\mu_{\sigma k}, J_{\sigma k})$. But $s=t$ implies that the number of trivial terms on both sides was also equal to begin with. Extending $\sigma$ trivially to a bijection of $\{1, \ldots, r\}$, we get the required permutation (because $(\lambda, \emptyset) \approx (\mu, \emptyset)$ for any $\lambda, \mu \in \mathfrak h ^*$).
\end{proof}
\noindent
In particular, Theorem~\ref{thm_intro_ufpvm} is now immediate from the above corollary.
\section{Unique Factorization for Restricted Parabolic Verma Modules}\label{section5}
 In this section, we prove the unique factorization of tensor products for certain classes of parabolic Verma modules restricted to compatible subalgebras of $\mathfrak g$.  

\subsection{} We begin with some auxiliary results. 
Recall that $\mathfrak g$ is a symmetrizable Kac-Moody algebra whose simple roots are $\{\alpha_1, \ldots, \alpha_n\}$. Fix an equivalence relation $\sim$ on $S=\{1, \ldots, n\}$. This gives rise to a set partition of $S$.
\begin{definition}
    Let $K \subseteq S$ be such that $K$ is a union of equivalence classes. We say that $\widehat{K}$ is a \emph{lift} of $K$ if 
    \begin{enumerate}
        \item $\widehat{K} \subseteq K$ 
        \item $\widehat{K}$ is connected
        \item $\widehat{K}$ meets every equivalence class in $K$.
    \end{enumerate}
\end{definition}
\begin{definition}\label{k hat}
    Let $K \subseteq S$ be such that $K$ is a union of equivalence classes. We say that $K$ is \emph{equiconnected} if there exists a lift $\widehat{K}$ of $K$ such that given any lift $\bar{K}$ of $K$ and any equivalence class $E$, $|\bar{K} \cap E| \geq |\widehat{K} \cap E|$. Any such lift $\widehat{K}$ will be referred to as a \emph{lean lift}.  
\end{definition}
\begin{remark}\begin{upshape} \label{rmk_minlift_card_unique}
    Note that if $\widehat{K}$ and $\widehat{K}'$ are two lean lifts of $K$, then given any equivalence class $E$ we have \begin{equation} \label{eqn_unique_minimal_lifts}    |\widehat{K} \cap E| = |\widehat{K}' \cap E|
\end{equation}   
\end{upshape}\end{remark}
    
    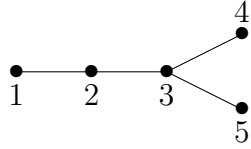
\begin{figure}[h]
        \centering
        \begin{tikzpicture}
            \node at (0,0) {$\bullet$};
            \node at (1,0) {$\bullet$};
            \node at (2,0) {$\bullet$};
            \node at (3,0.5) {$\bullet$};
            \node at (3,-0.5) {$\bullet$};
            \node at (0,-0.3) {$1$};
            \node at (1,-0.3) {$2$};
            \node at (2,-0.3) {$3$};
            \node at (3,0.8) {$4$};
            \node at (3,-0.8) {$5$};
            \draw (0,0) -- (2,0) -- (3,0.5);
            \draw (2,0) -- (3,-0.5);            
        \end{tikzpicture}\caption{The equivalence classes are given by $\{1\}$, $\{2\}$, $\{3\}$ and $\{4,5\}$.}\label{exmpl_non-uni-min_lift}
    \end{figure}
    In figure~\ref{exmpl_non-uni-min_lift}, there are two lean lifts for $S = \{1,2,3,4,5\}$ namely: $\{1,2,3,4\}$ and $\{1,2,3,5\}$.

	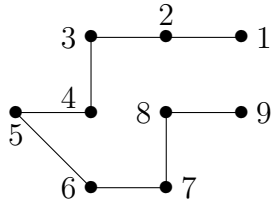
\begin{figure}[h]
		\centering
		\begin{tikzpicture}
			\node at (0,0) {$\bullet$};
			\node at (0,-.3) {$5$};
			\node at (1,0) {$\bullet$};
			\node at (0.7,0.2) {$4$};
			\node at (1,1) {$\bullet$};
			\node at (0.7,1) {$3$};
			\node at (1,-1) {$\bullet$};
			\node at (0.7,-1) {$6$};
			\node at (2,1) {$\bullet$};
			\node at (2,1.3) {$2$};
			\node at (3,1) {$\bullet$};
			\node at (3.3, 1) {$1$};
			\node at (2,0) {$\bullet$};
			\node at (1.7,0) {$8$};
			\node at (3,0) {$\bullet$};
			\node at (3.3,0) {$9$};
			\node at (2,-1) {$\bullet$};
			\node at (2.3,-1) {$7$};
			\draw (3,0) -- (2,0) -- (2,-1) -- (1,-1) -- (0,0) -- (1,0) -- (1,1) -- (2,1) -- (3,1);
		\end{tikzpicture}
		\caption{The equivalence classes are given by $\{5\}, \{3,4,6\}$, $\{2,7,8\}$ and $\{1,9\}$.}\label{exmpl_not-tilde-connected}
	\end{figure}
 In figure~\ref{exmpl_not-tilde-connected}, for $K = \{1, \ldots, 9\}$, the subsets $K_1 = \{1,2,3,4,5\}$ and $K_2 = \{5,6,7,8,9\}$ are both lifts but $|K_1 \cap \{3,4,6\}| > |K_2 \cap \{3,4,6\}|$ while $|K_1 \cap \{2,8,7\}| < |K_2 \cap \{2,8,7\}|$. It follows that $K$ is not equiconnected.
\begin{definition}
    We call $\lambda \in \mathfrak h ^*$ to be \emph{symmetric} if $\forall \; i, j \in S$ such that $i \sim j$, we have $\lambda(\alpha_i ^\vee) = \lambda(\alpha _j ^{\vee})$.
\end{definition}
\begin{remark}\begin{upshape}
    Suppose $\widehat{K}$ is a lean lift and $\bar{K}$ is some lift of $K$. It is immediate from Remark~\ref{rmk_minlift_card_unique} that if $\lambda$ is symmetric,  then $\beta(\lambda, \bar{K}) - \beta(\lambda, \widehat{K}) \in Q^+$. Further $\beta(\lambda, \bar{K}) = \beta(\lambda, \widehat{K})$ if and only if $\bar{K}$ is a lean lift of $K$.
\end{upshape}\end{remark}
Define $\Bar{\cC}$ to be the set of pairs $(\lambda, I) \in \cC$ satisfying
\begin{enumerate}
    \item $\lambda$ is symmetric
    \item $I$ is a union of equivalence classes of $\sim$
    \item $I$ is equiconnected.
\end{enumerate}

Define $\overline{Q} := \oplus _{[j] \in S/ \!\sim} \; \mathbb{Z}\gamma _{[j]}$. The set of all non-negative integer linear combinations of $\{\gamma_{[j]} \; : \; [j] \in S/\!\! \sim\}$ is denoted by $\overline{Q} ^+$. Define the map $$\pi: Q \rightarrow \overline{Q}$$ where $\alpha_i$ maps to (the formal symbol) $\gamma_{[i]}$. This induces a map from $\mathbb Z [[\{e^{-\alpha_i} | i\in S\}]]$ to $\mathbb Z [[\{e^{-\gamma_{[i]}}| [i] \in S/\!\!\sim \}]]$ which we again denote by $\pi$.
For $(\lambda, K) \in \Bar{\cC}$, we define $$\bar{\beta}(\lambda, K):= \pi(\beta(\lambda, \widehat{K}))$$ for any lean lift $\widehat{K}$ of $K$ as in Definition~\ref{k hat}. Observe that this does not depend on the choice of $\widehat{K}$ by \eqref{eqn_unique_minimal_lifts}.
\begin{lemma}
    If $(\lambda, I), (\mu, J) \in \bar{\mathcal C}$ are such that $\bar{\beta}(\lambda, I) = \bar{\beta}(\mu, J)$, then $(\lambda, I) \approx (\mu, J)$.
\end{lemma}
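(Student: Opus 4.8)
The plan is to recover the data $(\lambda, I)$ and $(\mu, J)$ from the common value $\bar\beta := \bar\beta(\lambda, I) = \bar\beta(\mu, J) \in \overline Q^+$, thereby forcing $I = J$ and $\lambda(\alpha_i^\vee) = \mu(\alpha_i^\vee)$ for all $i$. Write $\bar\beta = \sum_{[j]\in S/\sim} m_{[j]}\,\gamma_{[j]}$ with $m_{[j]}\in\mathbb Z_{\ge 0}$. First I would read off the set $I$ from the support of $\bar\beta$: by definition $\bar\beta(\lambda, I) = \pi(\beta(\lambda,\widehat I))$ for a lean lift $\widehat I$, and $\beta(\lambda,\widehat I) = \sum_{i\in\widehat I}(\lambda+\rho)(\alpha_i^\vee)\,\alpha_i$ has support exactly $\widehat I$ (each coefficient is a positive integer since $(\lambda+\rho)(\alpha_i^\vee) = \lambda(\alpha_i^\vee) + 1 \ge 2 > 0$). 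Applying $\pi$, the support of $\bar\beta(\lambda, I)$ is $\{[i] : i\in\widehat I\}$, which equals $\{[i] : i\in I\}$ because $\widehat I$ is a lift and hence meets every equivalence class contained in $I$; since $I$ is a union of equivalence classes, $I$ is determined by its image in $S/\sim$. Thus $\operatorname{supp}(\bar\beta)$ determines $I$, and likewise $J$, so $I = J =: K$.

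Next I would extract the values $\lambda(\alpha_i^\vee)$ from the coefficients $m_{[j]}$. Fix an equivalence class $E\subseteq K$ and a lean lift $\widehat K$. Since $\lambda$ is symmetric, $(\lambda+\rho)(\alpha_i^\vee)$ is the same for all $i\in E$ — call it $a_E := \lambda(\alpha_i^\vee)+1$. Then the coefficient of $\gamma_{[i]}$ (for $[i] = E$) in $\bar\beta(\lambda, K) = \pi\big(\sum_{i\in\widehat K}(\lambda+\rho)(\alpha_i^\vee)\alpha_i\big)$ is $a_E\cdot|\widehat K\cap E|$. By Remark~\ref{rmk_minlift_card_unique} the quantity $|\widehat K\cap E|$ depends only on $K$ and $E$, not on the chosen lean lift; denote it $n_{K,E}\ge 1$. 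So $m_E = a_E\, n_{K,E}$ for the pair $(\lambda, K)$, and similarly $m_E = b_E\, n_{K,E}$ for $(\mu, K)$ where $b_E := \mu(\alpha_i^\vee)+1$. Since $n_{K,E}\ne 0$, we get $a_E = b_E$, i.e. $\lambda(\alpha_i^\vee) = \mu(\alpha_i^\vee)$ for every $i\in E$; as $E$ ranges over all equivalence classes in $K = I = J$, this gives $(\lambda, I)\approx(\mu, J)$ by the definition of $\approx$.

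The only genuinely delicate point is the invariance of $|\widehat K\cap E|$ over lean lifts, but this is exactly the content of Remark~\ref{rmk_minlift_card_unique}, which I am entitled to assume; everything else is bookkeeping with the definitions of $\beta$, $\pi$, and symmetry. One should be slightly careful that $K$ is equiconnected so that a lean lift exists at all (guaranteed since $(\lambda, I),(\mu,J)\in\bar{\mathcal C}$), and that the recovery of $I$ from $\operatorname{supp}(\bar\beta)$ uses both that a lift is contained in $K$ and that it meets every class of $K$ — i.e. precisely conditions (1) and (3) in the definition of a lift. I do not expect any real obstacle here; the lemma is essentially a consistency check that $\bar\beta$ is a faithful invariant of the $\approx$-class, packaging the finite-type/combinatorial input already isolated in the preceding remarks.
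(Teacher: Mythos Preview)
Your proof is correct and follows essentially the same route as the paper's: first recover $I=J$ from the support of $\bar\beta$ (using that the coefficients $(\lambda+\rho)(\alpha_i^\vee)$ are positive and that $I,J$ are unions of equivalence classes), then use symmetry of $\lambda,\mu$ to write each coefficient of $\gamma_{[i]}$ as $|\widehat K\cap E|\cdot(\lambda+\rho)(\alpha_i^\vee)$, and finally invoke Remark~\ref{rmk_minlift_card_unique} (equivalently, equiconnectedness) to cancel the common factor $|\widehat K\cap E|$. One tiny slip: membership in $\bar{\mathcal C}\subseteq\mathcal C\subseteq\mathcal P$ only guarantees $\lambda(\alpha_i^\vee)\ge 0$, so $(\lambda+\rho)(\alpha_i^\vee)\ge 1$, not $\ge 2$; this does not affect the argument since positivity is all you use.
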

\begin{proof}
    Since $\bar{\beta}(\lambda, I) = \bar{\beta}(\mu, J)$, we have for any choice of $\hat{I}$ and $\hat{J}$:
    $$\pi\left(\sum_{i \in \hat{I}} (\lambda+\rho)(\alpha_i ^\vee)\alpha_i \right) = \pi\left(\sum_{i \in \hat{J}} (\mu+\rho)(\alpha_i ^\vee)\alpha_i\right )$$
    $$\sum_{[i] \in I/\sim}\left(\sum_{j \in [i] \cap \hat{I}} (\lambda+\rho)(\alpha_i ^\vee)\right) \gamma _{[i]} = \sum_{[i] \in J/\sim}\left(\sum_{j \in [i] \cap \hat{J}}(\mu+\rho)(\alpha_i ^\vee)\right)\gamma _{[i]}$$
    Since $\lambda + \rho$ and $\mu + \rho$ are regular dominant, $\gamma_{[i]}$'s are linearly independent and $I, J$ are unions of equivalence classes of $\sim$, it follows that $I = J$. Now, $$\sum_{[i] \in I/\sim}|[i] \cap \hat{I}|\cdot (\lambda+\rho)(\alpha_i ^\vee) \gamma _{[i]} = \sum_{[i] \in J/\sim}|[i] \cap \hat{J}| \cdot (\mu+\rho)(\alpha_i ^\vee)\gamma _{[i]}$$ 
    Since, $I$ and $J$ were equiconnected we have, for all $[i]$, $$|[i] \cap \hat{I}| = |[i] \cap \hat{J}|$$
    By comparing coefficients of $\gamma_{[i]}$ one sees that $(\lambda, I) \approx (\mu, J)$.
\end{proof}
\begin{remark}\begin{upshape}\label{rmk_partialorder-specialized}
        The relation $\succeq$ on $\cP$ can be restricted to $\Bar{\cC}$. It is elementary to check that for $(\lambda, I), (\mu, J) \in \Bar{\cC}$ we have $(\lambda, I) \succeq (\mu, J)$ if and only if one of the following holds:
        \begin{itemize}
            \item $I \supsetneq J$ or
            \item $I=J$ and $\bar{\beta}(\mu, J) - \bar{\beta}(\lambda, I) \in \overline{Q}^+$ 
        \end{itemize}
\end{upshape}\end{remark}
For $(\lambda, I) \in \cP$ let us write $\pi(L(\lambda, I)) = \sum \limits_{\gamma \in \overline{Q}} d ^{\lambda , I} _{\gamma} e^{-\gamma}$.
Observe that $d^{\lambda, I} _{\gamma}= \sum \limits_{\alpha} c^{\lambda, I} _{\alpha}$, where the sum runs over all $\alpha \in Q$ for which $\pi(\alpha) = \gamma$. The following is the analogue of Propositions~\ref{prop_new} and \ref{prop_imp}:
\begin{proposition} \label{prop_new_dup-4,5}
Suppose $(\lambda, I) \in \Bar{\cC}$.
\begin{enumerate}
    \item If $\gamma \in \overline{Q}$ is such that $\supp(\gamma) = I/\!\! \sim$ and $d^{\lambda, I} _{\gamma} \neq 0$ then $\gamma - \bar{\beta}(\lambda, I) \in \overline{Q}^+$.
    \item The coefficient of $e^{-\bar{\beta}(\lambda, I)}$ in $\pi(L(\lambda, I))$ is positive (i.e., $d^{\lambda, I} _{\bar{\beta}(\lambda, I)} > 0$) and independent of $\lambda$ for fixed $I$.
\end{enumerate}
\end{proposition}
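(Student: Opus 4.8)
The idea is to deduce Proposition~\ref{prop_new_dup-4,5} by pushing the structural statements of Propositions~\ref{prop_new} and~\ref{prop_imp} forward along $\pi$, with \emph{lean lifts} of $I$ taking over the role that $I$ itself plays in the unrestricted setting. Fix $(\lambda,I)\in\Bar{\cC}$ throughout. Since $\lambda$ is symmetric and $\rho(\alpha_i^\vee)=1$ for every $i$, the scalar $(\lambda+\rho)(\alpha_i^\vee)$ depends only on the class $[i]$; write $c_{[i]}$ for this common value, and note $c_{[i]}\ge 1$ whenever $i\in I$, since $I\subseteq J_\lambda$. Hence for any lift $K$ of $I$ we have $\pi(\beta(\lambda,K))=\sum_{[j]\in I/\!\!\sim}|[j]\cap K|\,c_{[j]}\,\gamma_{[j]}$, and in particular $\bar\beta(\lambda,I)=\sum_{[j]\in I/\!\!\sim}|[j]\cap\widehat I|\,c_{[j]}\,\gamma_{[j]}$ for a lean lift $\widehat I$. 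Because $K$ is a lift and $\widehat I$ is lean, $|[j]\cap K|\ge|[j]\cap\widehat I|$ for every class $[j]$, so $\pi(\beta(\lambda,K))-\bar\beta(\lambda,I)\in\overline Q^+$, with equality precisely when $K$ is itself a lean lift (this uses Remark~\ref{rmk_minlift_card_unique} and $c_{[j]}\ge 1$). We will also use repeatedly that an element of $Q^+$ mapping to $0$ under $\pi$ must itself be $0$.

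\textbf{Part (1).} Suppose $\gamma\in\overline Q$ has $\supp(\gamma)=I/\!\!\sim$ and $d^{\lambda,I}_\gamma\ne 0$. Choose $\alpha\in Q$ with $\pi(\alpha)=\gamma$ and $c^{\lambda,I}_\alpha\ne 0$. Then $\alpha\in Q^+$, since the coefficients of $L(\lambda,I)$ vanish off $Q^+$; consequently $\pi$ introduces no cancellation and $\supp(\pi(\alpha))=\supp(\alpha)/\!\!\sim$, so $\supp(\alpha)$ meets every class of $I$. Together with Proposition~\ref{prop_new}(5), which gives that $\supp(\alpha)$ is a connected subset of $I$, this shows that $K:=\supp(\alpha)$ is a lift of $I$. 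By Proposition~\ref{prop_new}(4) we have $c^{\lambda,K}_\alpha=c^{\lambda,I}_\alpha\ne 0$, and since $\supp(\alpha)=K$, Proposition~\ref{prop_new}(6) applied to $(\lambda,K)$ yields $\alpha-\beta(\lambda,K)\in Q^+$. Applying $\pi$ and adding the relation from the first paragraph,
\[
\gamma-\bar\beta(\lambda,I)=\pi\bigl(\alpha-\beta(\lambda,K)\bigr)+\bigl(\pi(\beta(\lambda,K))-\bar\beta(\lambda,I)\bigr)\in\overline Q^+ .
\]

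\textbf{Part (2).} Run the same analysis with $\gamma=\bar\beta(\lambda,I)$; note $\supp\bar\beta(\lambda,I)=I/\!\!\sim$ since each $c_{[j]}\ge 1$ and $\widehat I$ meets every class. For each $\alpha$ with $c^{\lambda,I}_\alpha\ne0$ and $\pi(\alpha)=\bar\beta(\lambda,I)$, Part~(1)'s argument produces a lift $K=\supp(\alpha)$ with $\alpha-\beta(\lambda,K)\in Q^+$. Applying $\pi$ gives $\bar\beta(\lambda,I)-\pi(\beta(\lambda,K))\in\overline Q^+$; since also $\pi(\beta(\lambda,K))-\bar\beta(\lambda,I)\in\overline Q^+$ (first paragraph), both an element and its negative lie in $\overline Q^+$, forcing $\pi(\beta(\lambda,K))=\bar\beta(\lambda,I)$, so $K$ is a lean lift. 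Then $\pi(\alpha-\beta(\lambda,K))=0$ with $\alpha-\beta(\lambda,K)\in Q^+$ forces $\alpha=\beta(\lambda,K)$. Conversely, for any lean lift $K$ of $I$ the element $\beta(\lambda,K)$ does contribute: $\supp(\beta(\lambda,K))=K$ as all $c_{[i]}\ge1$, so by Proposition~\ref{prop_new}(4) and Proposition~\ref{prop_imp}(3) we get $c^{\lambda,I}_{\beta(\lambda,K)}=c^{\lambda,K}_{\beta(\lambda,K)}>0$, and $\pi(\beta(\lambda,K))=\bar\beta(\lambda,I)$ as $K$ is lean. Distinct lean lifts have distinct supports, hence give distinct $\beta(\lambda,K)$, so
\[
d^{\lambda,I}_{\bar\beta(\lambda,I)}=\sum_{K}c^{\lambda,I}_{\beta(\lambda,K)}=\sum_{K}c^{\lambda,K}_{\beta(\lambda,K)},
\]
the sum over all lean lifts $K$ of $I$. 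Each summand is a positive integer independent of $\lambda$ by Proposition~\ref{prop_imp}, and the index set is nonempty (as $I$ is equiconnected) and manifestly independent of $\lambda$; this yields both positivity and $\lambda$-independence.

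\textbf{Expected obstacle.} The delicate part is the passage from $Q$ to $\overline Q$: one must control the information lost under $\pi$ — first ruling out cancellation of monomials (this is exactly why one records $\alpha\in Q^+$), and then, in Part~(2), squeezing $\supp(\alpha)$ from an arbitrary lift down to a lean lift and pinning $\alpha$ itself to $\beta(\lambda,\supp\alpha)$. Both steps rest on the combination of the ``$\pi(x)=0,\ x\in Q^+\Rightarrow x=0$'' principle with the cardinality-minimality built into the definition of lean lift. Once this reduction is in place, positivity and $\lambda$-independence are inherited directly from Proposition~\ref{prop_imp}, so no new computation is needed.
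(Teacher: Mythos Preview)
Your proof is correct and follows essentially the same approach as the paper: you push Proposition~\ref{prop_new}(4)--(6) forward along $\pi$, identify $\supp(\alpha)$ as a lift of $I$, and for part~(2) squeeze it down to a lean lift to obtain the sum $d^{\lambda,I}_{\bar\beta(\lambda,I)}=\sum_{\widehat I}c^{\lambda,I}_{\beta(\lambda,\widehat I)}$ over lean lifts, then invoke Proposition~\ref{prop_imp}. Your version is in fact more explicit than the paper's in justifying why a contributing $\alpha$ must equal $\beta(\lambda,K)$ for some lean lift $K$ (you spell out both the $\overline Q^+$ squeeze and the ``$\pi(x)=0,\ x\in Q^+\Rightarrow x=0$'' step), whereas the paper asserts this in one line.
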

\begin{proof}
Suppose $\alpha \in Q$ is such that $c^{\lambda, I} _{\alpha} \neq 0$ and $\pi(\alpha) = \gamma$. This implies that $\supp(\alpha)$ is a lift of $I$ (see part (5) in Proposition~\ref{prop_new}). If $\hat{I}$ is any lean lift of $I$ then  equiconnectedness of $I$ implies that $\pi(\beta(\lambda, \supp(\alpha))) - \pi(\beta(\lambda, \hat{I})) \in \overline{Q}^+$. Also, by (4) and (6) of Proposition~\ref{prop_new}, we have $\alpha - \beta(\lambda, \supp(\alpha)) \in Q^+$. 
Therefore by applying $\pi$ and combining with the previous observation we see that $\gamma - \bar{\beta}(\lambda, I) \in \overline{Q}^+$. This proves $(1)$.

Suppose if $\alpha$ is such that $c^{\lambda, I} _{\alpha} \neq 0$ and $\pi(\alpha) = \bar{\beta}(\lambda, I)$.  It follows from equiconnectedness of $I$ and part (5) in Proposition~\ref{prop_new} that $\alpha = \beta(\lambda, \hat{I})$ for some lean lift $\hat{I}$ of $I$. 
Therefore $$d^{\lambda, I} _{\bar{\beta(\lambda, I)}} = \sum_{\hat{I}} c^{\lambda, I} _{{\beta}(\lambda, \hat{I})}$$ where the sum runs over all lean lifts $\hat{I}$ of $I$ as in Definition~\ref{k hat}. Part (2) of the lemma now follows from part (4) of Proposition~\ref{prop_new} and Proposition~\ref{prop_imp}.
\end{proof}
\begin{lemma}\label{main_lemma-dup}
    Let $(\lambda_1, I_1), \ldots, (\lambda_r, I_r)$ be (not necessarily distinct) elements of $\bar{\cC}$ and let $L:= \sum_{i=1} ^r L(\lambda_k, I_k)$. 
    \begin{enumerate}
    \item For $(\mu, J) \in \bar{\cC}$, if the coefficient of $e^{-\bar{\beta}(\mu, J)}$ in $\pi(L)$ is non-zero, then $(\lambda_k, I_k) \succeq (\mu, J)$ for some $1 \leq k \leq r$.
    \item If $(\lambda_j, I_j)$ is maximal among $(\lambda_1, I_1), \ldots, (\lambda_r, I_r)$ then the coefficient of $e^{-\bar{\beta}(\lambda_j, I_j)}$ in $\pi(L)$ is positive. In particular, $\pi(L) \neq 0$ if $r > 0$.
    \end{enumerate}
\end{lemma}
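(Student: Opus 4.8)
The plan is to imitate the proof of Lemma~\ref{main_lemma} step for step, with Proposition~\ref{prop_new_dup-4,5} taking over the role played there by Propositions~\ref{prop_new} and \ref{prop_imp}, and with $\pi(L)$, $\bar{\beta}$, $\overline{Q}^+$, and Remark~\ref{rmk_partialorder-specialized} replacing $L$, $\beta$, $Q^+$, and the definition of $\succeq$ on $\cP$.

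For part (1), I would write $\pi(L)=\sum_{k=1}^r \pi(L(\lambda_k,I_k))$ and use the identity $d^{\lambda,I}_\gamma=\sum_{\pi(\alpha)=\gamma}c^{\lambda,I}_\alpha$ to extract, from the hypothesis, an index $j$ and an element $\alpha\in Q^+$ with $\pi(\alpha)=\bar{\beta}(\mu,J)$ and $c^{\lambda_j,I_j}_\alpha\neq 0$. By part (5) of Proposition~\ref{prop_new}, $\supp(\alpha)$ is a connected subset of $I_j$. A short computation --- using that $(\mu,J)\in\bar{\cC}\subseteq\cC$ forces $(\mu+\rho)(\alpha_i^\vee)>0$ for $i\in J$, that $J$ is a union of $\sim$-classes, and that a lean lift meets every class of $J$ --- shows $\supp(\bar{\beta}(\mu,J))=J/\!\!\sim$; since $\pi(\supp(\alpha))=\supp(\pi(\alpha))=J/\!\!\sim$ and $I_j$ is a union of $\sim$-classes, this yields $J\subseteq I_j$. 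If $J\subsetneq I_j$ we are done by the definition of $\succeq$. If $J=I_j$, then for $\gamma:=\pi(\alpha)=\bar{\beta}(\mu,J)$ one has $\supp(\gamma)=I_j/\!\!\sim$ and $d^{\lambda_j,I_j}_\gamma\neq 0$, so part (1) of Proposition~\ref{prop_new_dup-4,5} gives $\bar{\beta}(\mu,J)-\bar{\beta}(\lambda_j,I_j)\in\overline{Q}^+$; by Remark~\ref{rmk_partialorder-specialized} this is exactly $(\lambda_j,I_j)\succeq(\mu,J)$.

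For part (2), I would apply part (1) with $(\mu,J)=(\lambda_j,I_j)$: if the coefficient of $e^{-\bar{\beta}(\lambda_j,I_j)}$ in $\pi(L(\lambda_k,I_k))$ is nonzero, then $(\lambda_k,I_k)\succeq(\lambda_j,I_j)$, and maximality (in the sense of the Caveat preceding Lemma~\ref{main_lemma}) forces $(\lambda_k,I_k)\approx(\lambda_j,I_j)$. For such $k$ one has $\bar{\beta}(\lambda_k,I_k)=\bar{\beta}(\lambda_j,I_j)$, and part (2) of Proposition~\ref{prop_new_dup-4,5} shows $d^{\lambda_k,I_k}_{\bar{\beta}(\lambda_j,I_j)}>0$; for the remaining $k$ the contribution is $0$. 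Hence every summand $\pi(L(\lambda_k,I_k))$ contributes a non-negative coefficient of $e^{-\bar{\beta}(\lambda_j,I_j)}$, and the $j$-th summand contributes a strictly positive one, so the coefficient in $\pi(L)$ is positive and in particular $\pi(L)\neq 0$ whenever $r>0$.

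The conceptual content is entirely inherited from Lemma~\ref{main_lemma}; the only points demanding genuine care are the two support identities --- that $\supp(\bar{\beta}(\mu,J))=J/\!\!\sim$, and that $c^{\lambda_j,I_j}_\alpha\neq 0$ together with $\pi(\alpha)=\bar{\beta}(\lambda_j,I_j)$ forces $\alpha=\beta(\lambda_j,\widehat{I_j})$ for some lean lift $\widehat{I_j}$ (this last fact is already established inside the proof of Proposition~\ref{prop_new_dup-4,5}, so it can simply be cited) --- together with the bookkeeping needed to pass correctly between ``maximal'', $\succeq$, and $\approx$ on $\bar{\cC}$. I expect no real obstacle beyond this routine translation.
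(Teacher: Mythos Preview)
Your proposal is correct and follows the same line as the paper's proof, which is likewise a transcription of the argument for Lemma~\ref{main_lemma} with Proposition~\ref{prop_new_dup-4,5} and Remark~\ref{rmk_partialorder-specialized} playing the roles of Propositions~\ref{prop_new},~\ref{prop_imp} and the definition of $\succeq$. One small logical slip in part~(1): having extracted a pair $(j,\alpha)$ with $c^{\lambda_j,I_j}_\alpha\neq 0$ and $\pi(\alpha)=\gamma$ does not by itself yield $d^{\lambda_j,I_j}_\gamma\neq 0$, since the sum $\sum_{\pi(\alpha')=\gamma}c^{\lambda_j,I_j}_{\alpha'}$ could in principle cancel; the clean fix (and what the paper implicitly does) is to first choose $j$ with $d^{\lambda_j,I_j}_\gamma\neq 0$ and only then pick $\alpha$---or, as you note at the end, simply observe that the proof of Proposition~\ref{prop_new_dup-4,5}(1) already derives $\gamma-\bar{\beta}(\lambda_j,I_j)\in\overline{Q}^+$ from the single hypothesis $c^{\lambda_j,I_j}_\alpha\neq 0$.
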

\begin{proof}
Suppose that the coefficient of $e^{-\bar{\beta}(\mu, J)}$ is non-zero, then this monomial must come from $\pi(L(\lambda_k, I_k))$ for some $I_k \supseteq J$. If this containment is proper then we are done. Suppose now that $J=I_k$, then by part (1) of Proposition~\ref{prop_new_dup-4,5} and Remark~\ref{rmk_partialorder-specialized}, it follows that  $(\lambda _k, I_k ) \succeq (\mu, J)$.
By maximality of $(\lambda_j, I_j)$, part (2) of the lemma follows from part (1) together with Proposition~\ref{prop_new_dup-4,5} and Remark~\ref{rmk_partialorder-specialized}.
\end{proof}

\subsection{}
We are now ready to state and prove our main theorem for the restricted normalized Weyl numerators. 
\begin{theorem}\label{new_theorem_specialized}
    Let $\{(\lambda_k, I_k)\}_{k = 1} ^{r}$ and $\{(\mu_k, J_k)\}_{k=1} ^{s}$ be subsets of $\Bar{\cC}$. Then the following are equivalent:
    \begin{enumerate}
        \item  $        \sum_{k=1} ^{r} L(\lambda_k, I_k) = \sum_{k=1} ^{s} L(\mu_k, J_k)$ 
        \medskip
        \item        $ \sum_{k=1} ^{r} \pi(L(\lambda_k, I_k)) = \sum_{k=1} ^{s} \pi(L(\mu_k, J_k))$
        \medskip
    \item  $r=s$ and there exists $\sigma \in \mathfrak S _r$ such that $(\lambda_k, I_k) \approx (\mu_{\sigma(k)}, J_{\sigma(k)})$ for all $k$.
    \end{enumerate}
\end{theorem}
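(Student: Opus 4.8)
The plan is to establish the cycle of implications $(1) \Rightarrow (2) \Rightarrow (3) \Rightarrow (1)$. Of these, $(1) \Rightarrow (2)$ is immediate upon applying the ring homomorphism $\pi$ to both sides of the identity in $(1)$, and $(3) \Rightarrow (1)$ is also immediate: if $(\lambda_k, I_k) \approx (\mu_{\sigma(k)}, J_{\sigma(k)})$ for all $k$, then part (2) of Proposition~\ref{prop_new} gives $L(\lambda_k, I_k) = L(\mu_{\sigma(k)}, J_{\sigma(k)})$ for each $k$, and summing over $k$ yields $(1)$. In fact the equivalence $(1) \Leftrightarrow (3)$ is exactly Theorem~\ref{mainthm_unspl}, so the only genuinely new assertion is $(2) \Rightarrow (3)$ (equivalently, $(2) \Rightarrow (1)$): the content of the theorem is that applying $\pi$ to these particular sums of $L(\lambda, I)$'s loses no information, provided all pairs lie in $\bar{\cC}$.

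For $(2) \Rightarrow (3)$ I would run the same argument as in the proof of Theorem~\ref{mainthm_unspl}, with Lemma~\ref{main_lemma-dup} playing the role of Lemma~\ref{main_lemma} and $\bar{\beta}$ replacing $\beta$. Proceed by induction on $m := \min\{r, s\}$. If $m = 0$, say $s = 0$, then the right-hand side of $(2)$ is $0$, so part (2) of Lemma~\ref{main_lemma-dup} forces $r = 0$ as well, and $(3)$ holds vacuously. Suppose $m \geq 1$. Since $\succeq$ descends to a partial order on $\bar{\cC}/\!\!\approx$ (Remark~\ref{rmk_partialorder-specialized}), we may relabel so that $(\lambda_1, I_1)$ is maximal, in $\bar{\cC}/\!\!\approx$, among all of $(\lambda_1, I_1), \ldots, (\lambda_r, I_r), (\mu_1, J_1), \ldots, (\mu_s, J_s)$. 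By part (2) of Lemma~\ref{main_lemma-dup}, the coefficient of $e^{-\bar{\beta}(\lambda_1, I_1)}$ in $\pi\!\left(\sum_{k=1}^r L(\lambda_k, I_k)\right)$ is positive, hence nonzero; by $(2)$ it is therefore also nonzero in $\pi\!\left(\sum_{k=1}^s L(\mu_k, J_k)\right)$. Part (1) of Lemma~\ref{main_lemma-dup} then produces an index $k$ with $(\mu_k, J_k) \succeq (\lambda_1, I_1)$, and maximality of $(\lambda_1, I_1)$ forces $(\mu_k, J_k) \approx (\lambda_1, I_1)$. By part (2) of Proposition~\ref{prop_new} this gives $L(\mu_k, J_k) = L(\lambda_1, I_1)$, so applying $\pi$ and cancelling this common summand from both sides of $(2)$ reduces $m$ by one; the inductive hypothesis then supplies the permutation of the remaining indices, which we extend by matching the two cancelled pairs.

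Since the genuine combinatorial work has already been carried out in Proposition~\ref{prop_new_dup-4,5} and Lemma~\ref{main_lemma-dup}, the theorem itself is essentially bookkeeping, and I do not expect a serious obstacle. The points warranting care are: first, the maximality step relies on $\succeq$ truly inducing a partial order on $\bar{\cC}/\!\!\approx$, so that a finite subset has a maximal element — this is guaranteed by Remark~\ref{rmk_partialorder-specialized}; second, the coefficient comparisons in the induction take place in the power series ring $\mathbb{Z}[[\{e^{-\gamma_{[i]}} : [i] \in S/\!\!\sim\}]]$ in which $\pi$ takes values, not in the original ring, so one should be careful to phrase Lemma~\ref{main_lemma-dup} applications accordingly; and third, cancelling the pair $(\lambda_1, I_1)$ leaves the remaining tuples inside $\bar{\cC}$ and preserves hypothesis $(2)$, which is clear since we are only deleting one pair from each side.
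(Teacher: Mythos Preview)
Your proposal is correct and follows essentially the same route as the paper: the implications $(1)\Rightarrow(2)$ and $(3)\Rightarrow(1)$ are dispatched immediately, and $(2)\Rightarrow(3)$ is proved by induction on $\min\{r,s\}$, picking a $\succeq$-maximal element and using both parts of Lemma~\ref{main_lemma-dup} to match and cancel a pair from each side. The paper's write-up is terser but the argument is identical.
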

\begin{proof}
The statement (2) follows from (1) by applying $\pi$. The statement (1) follows from (3) by Proposition~\ref{prop_new}. We now prove that statement (2) implies (3).
We proceed by induction on $m:= \min\{r,s\}$. If $m=0$, then $r=s=0$ follows from part (2) of Lemma~\ref{main_lemma-dup}.
Suppose $m \geq 1$. Without loss of generality we assume that $(\lambda_1, I_1)$ is maximal among   $\{(\lambda_1, I_1),$ $\cdots,$ $(\lambda_r, I_r),$ $(\mu_1, J_1), \ldots, (\mu_s, J_s) \}$. By part (2) of Lemma~\ref{main_lemma-dup}, we see that the coefficient of $e^{-\bar{\beta}(\lambda_1, I_1)}$ is non-zero in the LHS of ~\eqref{eq_unspl_mainthm}. 
Now by part (1) of Lemma~\ref{main_lemma-dup} there exists $k$ such that $(\mu_k, J_k) \succeq (\lambda_1, I_1)$. But by maximality of $(\lambda_1, I_1)$ we conclude that $(\mu_k, J_k) \approx (\lambda_1, I_1)$. Therefore we may cancel $\pi(L(\lambda_1, I_1)) = \pi(L(\mu_k, J_k))$ from both sides of ~\eqref{eq_unspl_mainthm}, thereby reducing the value of $m$ by $1$.
\end{proof}

\subsection{} Now start with a \emph{Dynkin diagram} $G$. Let $\mathfrak g$ be the associated KMA. Recall that $S = \{1,\ldots, n\}$ is the vertex set of $G$ (or equivalently the set of simple roots). Given an equivalence relation $\sim$ on $S$ we define the subspace $\kk$ of $\mathfrak h$ as follows:
\begin{equation}\label{eqn_def_k}
\kk := \cap_{i \sim j} Ker(\alpha_i - \alpha_j)
\end{equation}
Then on $\kk$ we have $\alpha_i = \alpha_j$ if $i \sim j$ (actually, $\mathfrak k$ is the largest subspace of $\mathfrak h$ where we have $\alpha_i = \alpha_j$ whenever $i \sim j$). Moreover, for a given $i \in S$ the element $\omega_{[i]} ^\vee:= \sum_{k \sim i} \omega_k ^{\vee} \in \mathfrak k$ satisfies $\alpha_j(\omega_{[i]} ^\vee) = \delta_{[i],[j]}$. Here $\omega_k ^{\vee}$ is a fixed choice of fundamental co-weight associated to the simple root $\alpha_k$. i.e., $\alpha_l (\omega_k ^{\vee}) = \delta_{k,l} \; \forall l$. Therefore we have $\alpha_i = \alpha_j$ on $\kk$ iff $i \sim j$. Moreover, we conclude the following:
\begin{proposition}
    Any collection of simple roots corresponding to distinct $\sim$ orbit representatives forms a linearly independent set when restricted to $\kk$.  \qed
\end{proposition}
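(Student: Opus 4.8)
The plan is to exploit the elements $\omega_{[i]}^\vee \in \kk$ constructed just above the statement, which play the role of a dual basis. Recall that for each $i \in S$ we set $\omega_{[i]}^\vee = \sum_{k \sim i} \omega_k^\vee$, and that this element lies in $\kk$ and satisfies $\alpha_j(\omega_{[i]}^\vee) = \delta_{[i],[j]}$ for all $j \in S$. So the restrictions of the simple roots to $\kk$ already come equipped with a separating family of test vectors indexed by the $\sim$-orbits.

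Concretely, fix orbit representatives $i_1, \ldots, i_m \in S$ with $[i_1], \ldots, [i_m]$ pairwise distinct, and suppose $\sum_{t=1}^m c_t\, (\alpha_{i_t}|_\kk) = 0$ for some scalars $c_t \in \mathbb C$. Evaluate this relation at $\omega_{[i_s]}^\vee \in \kk$: since $\alpha_{i_t}(\omega_{[i_s]}^\vee) = \delta_{[i_t],[i_s]}$ and the $[i_t]$ are distinct, the left-hand side collapses to $c_s$, forcing $c_s = 0$. As $s$ ranges over $1, \ldots, m$ we conclude all $c_t = 0$, which is precisely the asserted linear independence.

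There is no real obstacle here — the statement is an immediate corollary of the properties of $\omega_{[i]}^\vee$ already established, and the only thing to be careful about is that we are pairing elements of $\mathfrak h^*$ restricted to $\kk$ against elements genuinely lying in $\kk$, so that the evaluations are legitimate. One could equivalently phrase the argument by noting that the $\omega_{[i]}^\vee$, one for each orbit, span a subspace of $\kk$ on which distinct-representative simple roots restrict to the coordinate functionals, hence are linearly independent on that subspace and a fortiori on all of $\kk$.
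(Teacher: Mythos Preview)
Your proof is correct and follows exactly the approach the paper intends: the proposition is stated with a \qed\ immediately after the construction of the $\omega_{[i]}^\vee$, indicating that it is meant as a direct consequence of the relation $\alpha_j(\omega_{[i]}^\vee)=\delta_{[i],[j]}$, which is precisely what you use.
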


\noindent
Let $\mathfrak s$ be any subspace of $\mathfrak h$ such that 
\begin{enumerate}
    \item $ \alpha_i|_{\mathfrak s} =  \alpha_j|_{\mathfrak s}$ whenever $i \sim j$.
    \item $\{\alpha_i|_{\mathfrak s} \; : \; i \in S/ \!\!\sim\}$ is a linearly independent subset of $\mathfrak s ^*$.
\end{enumerate}
Note that $\mathfrak k$ is one such subspace of $\mathfrak h$.
Denote the restriction map by $\p: \mathfrak h ^* \rightarrow \mathfrak s ^*$. This map extends uniquely to a map (which we again denote by $\p$) from $ \mathbb{C}[[\{e^{-\alpha_i}: i\in S\}]] $ to $ \mathbb{C}[[\{e^{-\p(\alpha_i)}: [i]\in S/ \!\!\sim\}]]$. Observe that the map $\p$ is the same as the map $\pi$ when one identifies $\gamma_{[i]}$ with $\p(\alpha_i)$. Therefore we have,
\begin{corollary} \label{cor_tensor_pvm}
    Let $(\lambda_1,I_1)$, $(\lambda_2, I_2), \ldots, (\lambda_r, I_r)$ and $(\mu_1, J_1)$, $(\mu_2, J_2), \ldots, (\mu_r,J_r) \in \bar{\cC}$ except that we now allow the $I_k, J_k$ to be empty.
    Then 
    \begin{equation}\label{eq2_spl_mainthm}
        \p(ch_{\mathfrak h} (\bigotimes_{k=1} ^{r} M(\lambda_k, I_k))) = \p(ch_{\mathfrak h} (\bigotimes_{k=1} ^{r} M(\mu_k, J_k)))
    \end{equation}
    if and only if
    \begin{enumerate}
        \item $\p(\sum_{k=1} ^r \lambda_k) = \p(\sum_{k=1} ^r \mu_k$)
        \item $\exists \: \sigma \in \mathfrak S _r$ such that $(\lambda_k, I_k) \approx (\mu_{\sigma(k)}, J_{\sigma(k)})$ for all $k$.
    \end{enumerate}
\end{corollary}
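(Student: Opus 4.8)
The plan is to mirror the proof of Corollary~\ref{corollary_unspl_ufpvm}, with Theorem~\ref{new_theorem_specialized} replacing Theorem~\ref{mainthm_unspl} and the restriction map $\p$---which coincides with $\pi$ once $\gamma_{[i]}$ is identified with $\p(\alpha_i)$---replacing the identity. First I would use the character formula $ch_{\mathfrak h}\, M(\lambda, I) = e^{\lambda}\, U(\lambda, I)/U(0,S)$ to rewrite~\eqref{eq2_spl_mainthm} and then apply $\p$; since $\p$ is a ring homomorphism of the relevant completed group algebras it commutes with finite products, with inversion of series of constant term $1$, and with $\log$, so~\eqref{eq2_spl_mainthm} turns into
\[
e^{\p(\sum_{k}\lambda_k)}\;\frac{\prod_{k}\p\big(U(\lambda_k, I_k)\big)}{\p\big(U(0,S)\big)^{r}}
\;=\;
e^{\p(\sum_{k}\mu_k)}\;\frac{\prod_{k}\p\big(U(\mu_k, J_k)\big)}{\p\big(U(0,S)\big)^{r}}.
\]
Each side is a single exponential times a power series in $\{e^{-\p(\alpha_i)}\}$ of constant term $1$, and since $\p$ sends $Q^{+}\setminus\{0\}$ into $\overline{Q}^{+}\setminus\{0\}$ (the $\p(\alpha_i)$ lie in $\overline{Q}^{+}$ and are linearly independent on $\mathfrak s$), the dominant exponential on the two sides is $e^{\p(\sum_k\lambda_k)}$, resp.\ $e^{\p(\sum_k\mu_k)}$; comparing these gives part~(1). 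Cancelling the invertible common factor $e^{\p(\sum_k\lambda_k)}\,\p(U(0,S))^{-r}$ then leaves $\prod_{k}\p(U(\lambda_k,I_k)) = \prod_{k}\p(U(\mu_k,J_k))$.

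Next I would discard the trivial factors. We have $U(\lambda, I)=1$ exactly when $I=\emptyset$, and for $I\neq\emptyset$ part~(2) of Lemma~\ref{main_lemma-dup}, applied to the single pair $(\lambda,I)$, gives $\pi(L(\lambda,I)) = -\log\p(U(\lambda,I)) \neq 0$, hence $\p(U(\lambda,I))\neq 1$; thus $\p(U(\lambda,I))=1\iff I=\emptyset$. Deleting these factors and relabelling, I am left with $\prod_{k=1}^{t}\p(U(\lambda_k,I_k)) = \prod_{k=1}^{s}\p(U(\mu_k,J_k))$ with all surviving pairs in $\bar{\cC}$; taking $-\log$ this becomes $\sum_{k=1}^{t}\pi(L(\lambda_k,I_k)) = \sum_{k=1}^{s}\pi(L(\mu_k,J_k))$, and Theorem~\ref{new_theorem_specialized} (equivalence of (2) and (3)) yields $t=s$ together with a permutation matching the nonempty $(\lambda_k,I_k)$ and $(\mu_k,J_k)$ up to $\approx$. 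Since $t=s$, the two sides had equally many deleted (empty) terms, so this permutation extends arbitrarily over them---legitimate because $(\lambda,\emptyset)\approx(\mu,\emptyset)$ for all $\lambda,\mu$---to the desired $\sigma\in\mathfrak S_r$, proving~(2). The converse just runs these steps backwards: (2) gives $U(\lambda_k,I_k)=U(\mu_{\sigma(k)},J_{\sigma(k)})$ by part~(2) of Proposition~\ref{prop_new}, hence $\p(U(\lambda_k,I_k))=\p(U(\mu_{\sigma(k)},J_{\sigma(k)}))$, and multiplying these and reinstating the prefactors (equal by~(1)) recovers~\eqref{eq2_spl_mainthm} via the character formula.

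The real content is carried by Theorem~\ref{new_theorem_specialized}, so I expect no serious obstacle in this corollary; the only delicate point is the bookkeeping around $\p$ on the completed group algebras: that it is well defined (locally finite on supports contained in $-Q^{+}$, the fibres of $\p|_{Q^{+}}$ being finite), that it commutes with $\log$ and with inversion of unit series, and---for the dominant-weight comparison---that $\p(\eta)\neq 0$ for $\eta\in Q^{+}\setminus\{0\}$. This last fact follows at once from the linear independence of $\{\p(\alpha_i):[i]\in S/\!\!\sim\}$ and the fact that each $\p(\alpha_i)$ lies in $\overline{Q}^{+}$.
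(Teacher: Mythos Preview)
Your proposal is correct and follows essentially the same route as the paper's own proof: rewrite via the parabolic Weyl--Kac character formula, compare the leading exponentials to obtain~(1), cancel to an equality of products of $\p(U(\cdot,\cdot))$'s, discard the trivial factors, take $\log$, and invoke Theorem~\ref{new_theorem_specialized}. The only differences are that you spell out a bit more carefully why $\p(U(\lambda,I))=1\iff I=\emptyset$ (via Lemma~\ref{main_lemma-dup}) and why the restriction map $\p$ behaves well on the completed group algebra, both of which the paper leaves implicit.
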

\begin{proof}
    Rewriting \eqref{eq2_spl_mainthm} using the character formula for parabolic Verma modules, we get
    \begin{equation*}
        \prod _{k=1} ^r \p(e^{\lambda_k})\frac{\p(U(\lambda_k, I_k))}{\p(U(0, S))} = \prod _{k=1} ^r \p(e^{\mu_k})\frac{\p(U(\mu_k, J_k))}{\p(U(0, S))}
    \end{equation*}
    Comparing the highest weights on both sides of \eqref{eq2_spl_mainthm}, we get $\p(\sum_{k = 1} ^s \lambda_k) = \p(\sum_{k=1} ^s \mu_k)$. Therefore, 
    \begin{equation*}
        \prod _{k=1} ^r \p(U(\lambda_k, I_k)) = \prod _{k=1} ^r \p(U(\mu_k, J_k))
    \end{equation*}
    We now proceed as in the proof of Corollary~\ref{corollary_unspl_ufpvm}.
    Note that $\p(U(\lambda, I)) = 1$ iff $I = \emptyset$. Ignoring these trivial terms in the above product and relabelling 
    \begin{equation*}
        \prod _{k=1} ^s \p(U(\lambda_k, I_k)) = \prod _{k=1} ^t \p(U(\mu_k, J_k))
    \end{equation*}
    where, now $\{(\lambda_1, I_1), \ldots, (\lambda_s, I_s), (\mu_1, J_1), \ldots, (\mu_t, J_t)\} \subseteq \bar{\cC}$.
Taking logarithm on both sides of the above equation, and applying Theorem~\ref{new_theorem_specialized} we get $s=t$ and there is a permutation $\sigma \in \mathfrak S_t$ such that $(\lambda_k, I_k) \approx (\mu_{\sigma k}, J_{\sigma k})$. The rest of the argument is exactly as in Corollary~\ref{corollary_unspl_ufpvm}.

For the converse part, the second condition implies that $U(\lambda_k, I_k) = U(\mu_{\sigma(k)}, J_{\sigma(k)})$. Therefore we have
    \begin{equation*}
        \prod _{k=1} ^r \frac{U(\lambda_k, I_k)}{U(0, S)} = \prod _{k=1} ^r\frac{U(\mu_k, J_k)}{U(0, S)}
    \end{equation*}
    Now by applying $p$ on the both sides of above equation
    and multiplying $e^{\p(\sum_{k=1}^r \lambda_k)}$ on the left hand side and multiplying  $e^{\p(\sum_{k=1}^r \mu_k)}$ on the right hand side of the equation gives us 
    \begin{equation*}
        \prod _{k=1} ^r \p(e^{\lambda_k})\frac{\p(U(\lambda_k, I_k))}{\p(U(0, S))} = \prod _{k=1} ^r \p(e^{\mu_k})\frac{\p(U(\mu_k, J_k))}{\p(U(0, S))}.
    \end{equation*}
    Now  using the character formula, we conclude the result. 
\end{proof}

\section{Unique Factorization Of Restricted Parabolic Vermas}\label{section6}
In this section, we will apply the results of the previous section to the special case of fixed point subalgebras of Dynkin diagram automorphisms and twisted graph automorphisms when $\mathfrak g$ is of untwisted affine type. 
\subsection{Graph automorphisms}
\begin{proposition}\label{prop_graph_tildeconnected}
    Let $G = (V(G), E(G))$ be a connected graph and $\Gamma$ be a subgroup of the group of all automorphisms of $G$. Then there exists a connected subgraph of $G$ whose vertex set intersects every $\Gamma$ orbit in $G$ at exactly one point.
\end{proposition}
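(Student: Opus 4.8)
The plan is to build the desired subgraph \emph{greedily}, adding one vertex at a time while maintaining two invariants: the current vertex set induces a connected subgraph, and it meets each $\Gamma$-orbit at most once. A maximal configuration with these two properties will automatically meet \emph{every} orbit, and hence be the subgraph we want.

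First I would set up the poset $\mathcal{T}$ of all subsets $T\subseteq V(G)$ such that the subgraph induced by $T$ is connected and $|T\cap O|\le 1$ for every $\Gamma$-orbit $O$, ordered by inclusion. It is nonempty (any singleton lies in it). Any chain $\{T_\alpha\}$ in $\mathcal{T}$ has its union $T_\infty=\bigcup_\alpha T_\alpha$ as an upper bound inside $\mathcal{T}$: if two vertices of $T_\infty$ lay in a common orbit they would already lie together in some $T_\alpha$, contradicting $T_\alpha\in\mathcal{T}$; and $T_\infty$ is connected because any two of its vertices lie in a common $T_\alpha$ (the chain is totally ordered) and are therefore joined by a path inside the connected induced subgraph on $T_\alpha$. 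Zorn's lemma then yields a maximal element $T\in\mathcal{T}$. When $G$ is finite, which covers all the applications in this paper, one may skip Zorn and simply iterate the extension step below, since it strictly increases $|T|$.

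The heart of the argument is the extension step: \textbf{if $T\in\mathcal{T}$ does not meet every orbit, then $T$ is not maximal.} Let $U:=\Gamma\cdot T=\bigcup_{v\in T}\Gamma v$ be the union of the orbits met by $T$; by assumption $U\subsetneq V(G)$, so connectedness of $G$ provides an edge $\{a,b\}$ with $a\in U$ and $b\notin U$. Write $a=\gamma v$ with $v\in T$ and $\gamma\in\Gamma$; applying the automorphism $\gamma^{-1}$ shows $\{v,\gamma^{-1}b\}$ is an edge of $G$. Since $U$ is a union of $\Gamma$-orbits it is $\Gamma$-stable, so $\gamma^{-1}b\notin U$; in particular $w:=\gamma^{-1}b$ lies in an orbit not met by $T$. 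Then $T\cup\{w\}\in\mathcal{T}$: it still meets each orbit at most once (the only orbit newly met is $\Gamma w$), and the induced subgraph on $T\cup\{w\}$ is connected because $w$ is adjacent to $v\in T$. This contradicts the maximality of $T$.

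Consequently the maximal $T$ meets every $\Gamma$-orbit, and by construction it meets each orbit at most once, hence exactly once; the subgraph induced by $T$ is the required connected subgraph. I do not anticipate a serious obstacle; the one point that must be handled with care is the extension step, specifically the use of $\Gamma$-invariance of $U$ to transport a boundary edge of $U$ back so that it is incident to $T$ itself — without this, an edge leaving $U$ need not touch $T$, and the greedy step would fail. The chain-union verification (that a nested union of connected induced subgraphs is connected) is the only other place needing a line of justification.
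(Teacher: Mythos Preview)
Your proof is correct and follows essentially the same approach as the paper's: both take a maximal element of the poset of connected vertex sets meeting each orbit at most once, observe that its $\Gamma$-saturation (your $U$, the paper's $N=\bigcup_{\omega\in\Gamma}\omega(M)$) would be a proper subset if some orbit were missed, find a boundary edge, and transport it by an automorphism back to the maximal set to contradict maximality. The only addition in your version is the Zorn's-lemma verification that maximal elements exist for possibly infinite $G$; the paper tacitly assumes finiteness (which holds in all its applications, the graph being a Dynkin diagram).
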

\begin{proof}
    Let $\mathcal{A}$ denote the set of all subsets of $V(G)$ that intersect any $\Gamma$ orbit in $G$ in at most one point and whose induced subgraph is connected. Clearly $\mathcal{A}$ is non-empty because it contains all the singleton subsets of $V(G)$.

\medskip
    Let $M \in \mathcal{A}$ be a maximal element with respect to the containment partial order. For any graph automorphism $\omega \in \Gamma$ we see that, $\omega (M)$ also belongs to $\mathcal{A}$. Suppose that $M$ does not intersect a $\Gamma$-orbit in $G$. This means that
    $$N := \bigcup\limits_{\omega \in \Gamma} \omega (M) \neq V(G).$$ But since $G$ is connected, there exist elements $x \in N$ and $y \in V(G) - N$ such that $(x, y) \in E(G)$. But there exists some $\omega \in \Gamma$ for which $\omega(x) \in M$. Therefore we would have $M \cup \{\omega (y)\} \in \mathcal{A}$ which is a contradiction to the assumption that $M$ was maximal. So, $M$ intersects every $\Gamma$-orbit in $G$. 
\end{proof}

Any graph automorphism $\omega$ of $G$ induces a Lie algebra automorphism of $\mathfrak g$ (which will be referred to as \emph{diagram automorphisms}) described as follows: It maps the generators $e_i, h_i$ and $f_i$ to $e_{\omega i}, h_{\omega i}$ and $f_{\omega i}$ respectively for all $i \in S$. This assignment extends uniquely to a Lie algebra automorphism of the derived subalgebra of $\mathfrak g$. This map can be extended to an automorphism of $\mathfrak g$ in a unique way if we impose the condition that it preserves the standard invariant bilinear form and has order same as that of $\omega$. Such an automorphism preserves $\mathfrak h$ and its induced action on $\mathfrak h^*$ permutes the simple roots. See \cite[\S3.2]{Fuchs_1996}.

\medskip
Let $\Gamma$ be a subgroup of diagram automorphisms of $\mathfrak g$. 
Denote by $\mathfrak g ^{\Gamma}$ (resp. $\mathfrak h ^{\Gamma}$) the fixed point subalgebra of $\mathfrak g$ (resp. $\mathfrak h$) with respect to $\Gamma$. 

\begin{proposition}\label{prop-h_gamma=k}
    Let $A$ be a GCM whose nullity is at most $1$. Then for $\mathfrak g = \mathfrak g (A)$ we have,
    \begin{equation*}
        \mathfrak h^{\Gamma} = \bigcap _{w \in \Gamma, \; i \in S} Ker(\alpha_i - \alpha_{w(i)})
    \end{equation*}
    i.e., $\mathfrak h ^{\Gamma} = \mathfrak k$ as in the notation of \eqref{eqn_def_k}.
\end{proposition}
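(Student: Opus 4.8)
The plan is to prove the two inclusions separately, with the easy direction being $\kk \subseteq \mathfrak h^\Gamma$. Indeed, if $h \in \kk$ and $w \in \Gamma$, then for every $i \in S$ we have $\alpha_i(w(h)) = (w^{-1}\alpha_i)(h) = \alpha_{w^{-1}i}(h) = \alpha_i(h)$, the last step because $i \sim w^{-1}i$ (they lie in the same $\Gamma$-orbit, which is an equivalence class of $\sim$) and $h \in \kk = \cap_{i\sim j}\mathrm{Ker}(\alpha_i-\alpha_j)$. Since the simple roots $\alpha_i$ together with... well, here is where the nullity hypothesis enters: the $\alpha_i$ need not span $\mathfrak h^*$, so vanishing of all $\alpha_i - \alpha_{w(i)}$ on $h$ does not by itself force $w(h) = h$. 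One must argue that $w$ also fixes a complement. I would recall the standard construction of the diagram automorphism: on the derived subalgebra it is determined by permuting the Chevalley generators, so it fixes every element of $\mathfrak h' := \mathfrak h \cap [\mathfrak g,\mathfrak g] = \mathrm{span}\{\alpha_i^\vee\}$ pointwise? No — it permutes the $\alpha_i^\vee$, sending $\alpha_i^\vee \mapsto \alpha_{w(i)}^\vee$. So the correct statement is that $w$ restricted to $\mathfrak h'$ is the permutation of coroots, and $\dim \mathfrak h = \dim \mathfrak h' + (\text{nullity of } A)$.

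So the core of the argument is the reverse inclusion $\mathfrak h^\Gamma \subseteq \kk$, combined with a dimension/equality count. First I would show $\mathfrak h^\Gamma \subseteq \kk$: if $w(h) = h$ for all $w \in \Gamma$, then for $i \sim j$ there is $w \in \Gamma$ with $w(i) = j$, hence $\alpha_i(h) = \alpha_i(w^{-1}... )$ — more cleanly, $(\alpha_i - \alpha_j)(h) = \alpha_i(h) - \alpha_{w(i)}(h) = \alpha_i(h) - (w^{-1}\alpha_i)(h)$, and since $w(h)=h$ we get $(w^{-1}\alpha_i)(h) = \alpha_i(w^{-1}h)$; but $w^{-1}h = h$ too, so this equals $\alpha_i(h)$, giving $(\alpha_i-\alpha_j)(h) = 0$. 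Hence $h \in \kk$. Combined with the easy inclusion, $\mathfrak h^\Gamma = \kk \cap \mathfrak h^\Gamma$... no: we have shown both $\kk \subseteq \mathfrak h^\Gamma$ and $\mathfrak h^\Gamma \subseteq \kk$, so in fact $\mathfrak h^\Gamma = \kk$ directly, \emph{provided the first inclusion is genuinely valid}. The subtlety is precisely whether $h \in \kk \Rightarrow w(h) = h$. The vanishing of $(\alpha_i - \alpha_{w(i)})$ on $h$ for all $i$ shows $\alpha_i(w(h) - h) = 0$ for all $i$, i.e. $w(h) - h \in \cap_i \mathrm{Ker}\,\alpha_i$. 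This kernel has dimension equal to the nullity of $A$, so it is either $0$ or $1$-dimensional. If it is $0$ we are done immediately. If it is $1$-dimensional — the affine-type case — one needs the extra input that $w$ fixes $\cap_i\mathrm{Ker}\,\alpha_i$ pointwise, or at least that $w(h) - h$ cannot be a nonzero element of it.

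The hard part, and the step I expect to require the nullity-$\le 1$ hypothesis in an essential way, is exactly this last point: controlling $w$ on the "center" direction $c \in \cap_i \mathrm{Ker}\,\alpha_i$ in the affine case. Here I would invoke that the canonical central element $c$ (spanning $\cap_i \mathrm{Ker}\,\alpha_i = \mathfrak z(\mathfrak g)$ when $A$ is affine) is characterized intrinsically — e.g. as the unique, up to scalar, central element with a fixed normalization against $\delta$, or via $c = \sum a_i^\vee \alpha_i^\vee$ with the $a_i^\vee$ the labels of the dual Cartan matrix — and a diagram automorphism permutes the $\alpha_i^\vee$ by an automorphism of the Dynkin diagram, which preserves these labels ($a_{w(i)}^\vee = a_i^\vee$). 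Hence $w(c) = c$. Therefore, writing $\mathfrak h = \mathfrak h' \oplus \mathbb{C}d$ for a degree derivation $d$ with $\mathfrak h' \supseteq \mathbb{C}c$, and knowing $w$ acts on $\mathfrak h'$ by the coroot permutation (which fixes $c$) and on $d$ by $d \mapsto d + (\text{something in } \mathfrak h')$ consistent with order, one checks $w(h) - h \in \mathfrak h'$ always, so $w(h)-h \in \mathfrak h' \cap \cap_i\mathrm{Ker}\,\alpha_i = \mathbb{C}c$, and then the projection of $h$ to the $d$-direction being $w$-fixed forces... Actually the cleanest route: show $w(h) - h \in \mathbb C c$ and that the $\mathbb C c$-component of $w(h)-h$ equals zero because $w$ preserves the pairing with $d$ — this uses that $w$ preserves the invariant form and hence $\langle w(h) - h, d\rangle$ relations. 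I would present this carefully for the affine case and note the finite (nullity $0$) case is then immediate. An honest alternative I would mention: one can also deduce $\mathfrak h^\Gamma = \kk$ by a dimension count — $\dim \kk$ equals the number of $\sim$-orbits plus nullity, $\dim \mathfrak h^\Gamma$ equals $(\text{number of }\Gamma\text{-orbits on }S) + \dim(\mathbb Cd)^\Gamma$, and for a diagram automorphism of a nullity-$\le 1$ GCM these coincide — but the direct inclusion argument above is more transparent and I would use it as the main line.
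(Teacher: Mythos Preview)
Your inclusion $\mathfrak h^\Gamma \subseteq \kk$ is correct and is exactly what the paper does. For the reverse inclusion the paper takes precisely what you call the ``honest alternative'': it computes both dimensions (each equals the number of $\Gamma$-orbits on $S$ plus the nullity, the extra $1$ in the nullity-$1$ case coming from the existence of a $\Gamma$-fixed element $d\in\mathfrak h\setminus\mathfrak h'$, for which it cites \cite{Fuchs_1996}) and concludes equality from the inclusion already shown.

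Your main line --- the direct element-wise argument for $\kk\subseteq\mathfrak h^\Gamma$ --- has a gap at the last step. You correctly reduce to $w(h)-h\in\mathbb C c$ and correctly establish $w(c)=c$ via the invariance of the dual labels. But the claim that $\langle w(h)-h,d\rangle=0$ ``because $w$ preserves the invariant form'' does not follow: form-invariance gives $\langle w(h),w(d)\rangle=\langle h,d\rangle$, not $\langle w(h),d\rangle=\langle h,d\rangle$, and you yourself allow $w(d)\in d+\mathfrak h'$ rather than $w(d)=d$ (indeed, for the rotation of $A_n^{(1)}$ the standard $d$ is \emph{not} fixed). The gap is easily closed using only the ingredient you already isolated: writing $w(h)=h+\mu_w c$ and using $w(c)=c$, one checks $\mu_{w_1w_2}=\mu_{w_1}+\mu_{w_2}$, so $w\mapsto\mu_w$ is a homomorphism from the finite group $\Gamma$ to $(\mathbb C,+)$ and hence vanishes. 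With this fix your direct argument is complete and gives a genuinely different proof from the paper's dimension count.
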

\begin{proof}
    It is easy to see that $\mathfrak h ^{\Gamma}$ is a subset of $\mathfrak k$, since for all $h \in \mathfrak h$ and $i \in S$ we have $$\alpha_i (h) = \alpha_{\omega(i)} (\omega (h))$$ for any graph automorphism $\omega$.
    Now one checks that the dimension of $\mathfrak h ^{\Gamma}$ is either the number of orbits of $\Gamma$'s action on the Dynkin diagram or one more to it depending on the nullity of A being $0$ or $1$ (see the construction in \cite[\S3.2]{Fuchs_1996}. Basically, in the nullity $1$ case one can find a $d \in \mathfrak h \backslash \mathrm{span}\{\alpha_i ^{\vee} \;:\; i \in S\}$ such that $\omega(d) = d$). In both cases, it matches the dimension of $\mathfrak k$.
\end{proof}
\begin{example}
It can be checked that for the following graph, the conclusion of the above proposition is not true. Note that the nullity of the GCM associated to this graph is $2$.
    \begin{figure}[h]
    \begin{tikzpicture}[scale=0.6]
\draw (-9,0)-- (-8.3,-1.34);
\node at (-9,0) {$\bullet$};
\draw (-8.3,-1.34)-- (-4.76,-2.86);
\node at (-8.3, -1.34) {$\bullet$};
\draw (-4.76,-2.86)-- (-0.9,-3.52);
\node at (-4.76, -2.86) {$\bullet$};
\draw (-0.9,-3.52)-- (0.66,-2.74);
\node at (-0.9, -3.52) {$\bullet$};
\draw (0.66,-2.74)-- (0.44,1.16);
\node at (0.66, -2.74) {$\bullet$};
\draw (0.44,1.16)-- (-1.2,4.8);
\node at (0.44, 1.16) {$\bullet$};
\draw (-1.2,4.8)-- (-2.92,5.18);
\node at (-1.2, 4.8) {$\bullet$};
\draw (-2.92,5.18)-- (-6.24,3.38);
\node at (-2.92,5.18) {$\bullet$};
\draw (-6.24,3.38)-- (-9,0);
\node at (-6.24,3.38) {$\bullet$};
\draw (-8.3,-1.34)-- (-6.24,3.38);
\draw (-9,0)-- (-4.76,-2.86);
\draw (-4.76,-2.86)-- (0.66,-2.74);
\draw (-0.9,-3.52)-- (0.44,1.16);
\draw (0.44,1.16)-- (-2.92,5.18);
\draw (-6.24,3.38)-- (-1.2,4.8);
\node at (-9.34,0.13) {$1$};
\node at (-8.14,-2.01) {$2$};
\node at (-4.6, -3.53) {$3$};
\node at (-0.74,-3.89) {$4$};
\node at (0.92,-2.31) {$5$};
\node at (0.7,1.49) {$6$};
\node at (-1.04,5.23) {$7$};
\node at (-2.76,5.61) {$8$};
\node at (-6.08,3.81) {$9$};
\end{tikzpicture}
\end{figure}

Define an equivalence relation on the set of nodes $S$ of the Dynkin diagram of $\mathfrak g$ as follows:
$$i \sim j \iff \; \exists \; \omega \in \Gamma= \text{Aut}(G) \text{ such that } \omega(i) = j$$
\end{example}

In the view of Proposition~\ref{prop_graph_tildeconnected}, for the equivalence relation $\sim$ induced by $\Gamma$, any connected subset of $S$ which is a union of equivalence classes is indeed equiconnected. Also, it is elementary to check that $\lambda \in \mathfrak h^*$ is symmetric if and only if $\lambda(\omega(h)) = \lambda(h)$ for all $\omega \in \Gamma$ and $h \in \mathfrak h$.
\begin{corollary}\label{cor_fixedpointsubalg}
    Let $A$ be a symmetrizable Generalised Cartan Matrix whose nullity is at most $1$. Suppose $(\lambda_1, I_1), \ldots, (\lambda_r, I_r), (\mu_1, J_1), \ldots, (\mu_r, J_r) \in \bar{\cC}$ with the exception that the $I_k$ and $J_k$ could be empty. Then
    \begin{equation*}
    \prod _{k=1} ^r ch_{\mathfrak h ^{\Gamma}} M(\lambda_k, I_k) = \prod _{k=1} ^r ch_{\mathfrak h ^{\Gamma}} M(\mu_k, J_k)
    \end{equation*} if and only if
    \begin{enumerate}
        \item $\sum_{k=1} ^r \lambda_k = \sum_{k=1} ^r \mu_k$
        \item there exists $\sigma \in \mathfrak S_r$ such that $(\lambda_k, I_k) \approx (\mu_{\sigma(k)}, J_{\sigma(k)})$.
    \end{enumerate}
\end{corollary}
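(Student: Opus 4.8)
The plan is to deduce this statement from Corollary~\ref{cor_tensor_pvm} by making two specializations: take $\sim$ to be the equivalence relation on $S$ induced by $\Gamma$ (so $i\sim j$ exactly when $j=\omega(i)$ for some $\omega\in\Gamma$), and take the subspace $\mathfrak s\subseteq\mathfrak h$ in that corollary to be $\mathfrak s=\mathfrak h^{\Gamma}$. First I would verify that $\mathfrak s=\mathfrak h^{\Gamma}$ is admissible, i.e., satisfies conditions (1) and (2) of the paragraph preceding Corollary~\ref{cor_tensor_pvm}. Condition (1) is immediate from the identity $\alpha_i(h)=\alpha_{\omega(i)}(\omega(h))$, valid for every diagram automorphism $\omega$. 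For condition (2) I would invoke the hypothesis that the nullity of $A$ is at most $1$: Proposition~\ref{prop-h_gamma=k} then gives $\mathfrak h^{\Gamma}=\mathfrak k$, so the required linear independence of $\{\alpha_i|_{\mathfrak h^{\Gamma}}:[i]\in S/\!\!\sim\}$ is precisely the proposition asserting that simple roots attached to distinct $\sim$-orbit representatives are linearly independent when restricted to $\mathfrak k$. Finally, by Proposition~\ref{prop_graph_tildeconnected}, for this $\sim$ every connected subset of $S$ that is a union of equivalence classes admits a lift meeting each class in a single node; such a lift has minimal cardinality in every class, hence is lean, so every such subset is automatically equiconnected. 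Combined with the remark that (for nullity $\le 1$) the symmetric weights are exactly the $\Gamma$-invariant ones, this shows that the membership hypothesis ``$(\lambda_k,I_k),(\mu_k,J_k)\in\bar{\cC}$, the $I_k,J_k$ possibly empty'' is precisely the hypothesis of the corollary being proved.

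Next I would translate the character identity into the language of Corollary~\ref{cor_tensor_pvm}. By the definition of the restricted character (with $\bar{\mathfrak h}=\mathfrak g^{\Gamma}\cap\mathfrak h=\mathfrak h^{\Gamma}$) one has $\mathrm{ch}_{\mathfrak h^{\Gamma}}(V)=\p(\mathrm{ch}_{\mathfrak h}(V))$, where $\p$ is the restriction map on power series used in that corollary; this is well defined here because condition (2) forces the fibres of $\p$ over $\overline Q^{+}$ inside $Q^{+}$ to be finite. Since characters are multiplicative over tensor products and $\p$ is a ring homomorphism, the displayed product identity $\prod_k \mathrm{ch}_{\mathfrak h^{\Gamma}} M(\lambda_k,I_k)=\prod_k \mathrm{ch}_{\mathfrak h^{\Gamma}} M(\mu_k,J_k)$ coincides with Equation~\eqref{eq2_spl_mainthm}. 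Corollary~\ref{cor_tensor_pvm} then yields $\p(\sum_k\lambda_k)=\p(\sum_k\mu_k)$ and a permutation $\sigma\in\mathfrak S_r$ with $(\lambda_k,I_k)\approx(\mu_{\sigma(k)},J_{\sigma(k)})$ for all $k$, the latter being conclusion (2) verbatim.

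The one step not handed to us directly by Corollary~\ref{cor_tensor_pvm} is upgrading $\p(\sum_k\lambda_k)=\p(\sum_k\mu_k)$ to the genuine equality $\sum_k\lambda_k=\sum_k\mu_k$ in $\mathfrak h^{*}$ of conclusion (1). For this I would argue as follows. Each $\lambda_k,\mu_k$ is symmetric, hence $\Gamma$-invariant, so both sums lie in $(\mathfrak h^{*})^{\Gamma}$, and it suffices to show the restriction map $\p\colon(\mathfrak h^{*})^{\Gamma}\to(\mathfrak h^{\Gamma})^{*}$ is injective. Writing $\mathfrak h=\mathfrak h^{\Gamma}\oplus W$ with $W$ a $\Gamma$-stable complement (available since $\Gamma$ is finite), one gets $(W^{*})^{\Gamma}=0$, so any $\Gamma$-invariant functional vanishing on $\mathfrak h^{\Gamma}$ vanishes identically; applying this to $\sum_k\lambda_k-\sum_k\mu_k$ finishes the forward implication. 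The converse is immediate: (1) and (2) give $\p(\sum_k\lambda_k)=\p(\sum_k\mu_k)$ and, by part (2) of Proposition~\ref{prop_new}, $U(\lambda_k,I_k)=U(\mu_{\sigma(k)},J_{\sigma(k)})$, so the converse half of Corollary~\ref{cor_tensor_pvm} applies.

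I expect the principal difficulty to be conceptual bookkeeping rather than computation: recognizing correctly that $\bar{\cC}$ in the graph-automorphism setting is cut out by exactly the three stated hypotheses (using the automatic equiconnectedness from Proposition~\ref{prop_graph_tildeconnected} together with the symmetric-equals-$\Gamma$-invariant remark), and the final injectivity argument that passes from equality after restriction to $\mathfrak h^{\Gamma}$ to honest equality of weights. This last point is where the nullity hypothesis is genuinely indispensable: without it $\mathfrak h^{\Gamma}$ can be a proper subspace of $\mathfrak k$ and symmetric weights need not be $\Gamma$-invariant, as the nullity-$2$ example already illustrates.
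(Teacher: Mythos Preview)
Your proposal is correct and follows essentially the same route as the paper: reduce to Corollary~\ref{cor_tensor_pvm} with $\mathfrak s=\mathfrak h^{\Gamma}=\mathfrak k$ (via Proposition~\ref{prop-h_gamma=k}), then upgrade $\p(\sum\lambda_k)=\p(\sum\mu_k)$ to equality on all of $\mathfrak h$ using that the $\lambda_k,\mu_k$ are $\Gamma$-invariant. You spell out more carefully than the paper both the verification that $\mathfrak h^{\Gamma}$ satisfies the admissibility conditions and the injectivity of $(\mathfrak h^{*})^{\Gamma}\to(\mathfrak h^{\Gamma})^{*}$, but the skeleton is identical.
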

\begin{proof}
    Part (2) follows from Proposition~\ref{prop-h_gamma=k} and Corollary~\ref{cor_tensor_pvm}. We also have by the same that $\sum_{k=1} ^r \lambda_k = \sum_{k=1} ^r \mu_k$ when restricted to $\mathfrak h ^{\Gamma}$. But since the $\lambda_k$'s and $\mu_k$'s are $\Gamma$-invariant (in other words symmetric) we have $\sum_{k=1} ^r \lambda_k = \sum_{k=1} ^r \mu_k$ on the whole of $\mathfrak h$. The converse part follows from the character formula ~\eqref{parabolicverma}.
\end{proof}
In particular, Theorem~\ref{mainthm2intro} now follows from the above corollary.
\subsection{Twisted graph automorphisms}
   Let $\mathfrak g$ be an untwisted affine Lie algebra. Then $\mathfrak g$ can be realized very explicitly 
as follows: $$\mathfrak g = \mathfrak g_0\otimes \mathbb{C}[t, t^{-1}]\oplus \mathbb{C}c \oplus \mathbb{C}d,$$
   where $\mathfrak g_0$ is the underlying finite-dimensional simple Lie algebra, $c$ is the central element and $d$ is the derivation. 
   Let $\sigma$ be a diagram automorphism of the underlying finite-dimensional simple algebra $\mathfrak g _0$. This induces an automorphism $\tau$ of $\mathfrak g$ called the \emph{twisted diagram automorphism} described as follows:
   $$\tau(x \otimes t^{k}) := e^{-2 k\pi i/n} \sigma(x) \otimes t^k, x\in \mathfrak g_0, \quad \quad \tau(c) := c \quad \quad \tau(d) := d$$where $n$ is the order of $\sigma$.
 But $\sigma$ also induces a diagram automorphism $\bar{\tau}$ of $\mathfrak g$ which is obtained by fixing the affine node. That is,$$\bar{\tau}(x \otimes t^k) := \sigma(x) \otimes t^k \quad \quad  \bar{\tau} (c) := c \quad \quad \bar{\tau}(d) := d,$$
  see \cite[\S9.5, \S18.3 and \S18.4]{Carter} for more details. 

  \medskip
It is well known that we can obtain the twisted affine Lie algebras from the untwisted ones as fixed point subalgebras of twisted diagram automorphisms (see \cite[\S 18.4]{Carter}). We thus have the following corollary concerning the tensor products of parabolic Verma modules of an untwisted affine Lie algebra restricted to the corresponding twisted affine algebra (obtained as a fixed point subalgebra): 
\begin{corollary}\label{cor_twisted}
    Let $\mathfrak g$ be an untwisted affine Lie algebra. Let $\tau$ be a twisted diagram automorphism of $\mathfrak g$ (and $\bar{\tau}$ be the associated diagram automorphism). Suppose $(\lambda_1, I_1), \ldots, (\lambda_r, I_r),$ $(\mu_1, J_1), \ldots,$ $(\mu_r, J_r) \in \bar{\cC}$ (with respect to $\bar{\tau}$) with the exception that the $I_k$ and $J_k$ could be empty. We have
    \begin{equation*}
    \prod _{k=1} ^r ch_{\mathfrak h ^{\tau}} M(\lambda_k, I_k) = \prod _{k=1} ^r ch_{\mathfrak h ^{\tau}} M(\mu_k, J_k)
    \end{equation*}
    if and only if
    \begin{enumerate}
        \item $\sum_{k=1} ^r \lambda_k = \sum_{k=1} ^r \mu_k$
        \item there exists $\sigma \in \mathfrak S_r$ such that $(\lambda_k, I_k) \approx (\mu_{\sigma(k)}, J_{\sigma(k)}) $.
    \end{enumerate} 
\end{corollary}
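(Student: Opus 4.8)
The plan is to reduce Corollary~\ref{cor_twisted} to Corollary~\ref{cor_tensor_pvm} exactly as Corollary~\ref{cor_fixedpointsubalg} was reduced to it; the only new ingredient is identifying the relevant subspace $\mathfrak s$ of $\mathfrak h$ and checking it satisfies the two hypotheses (1)--(2) preceding Corollary~\ref{cor_tensor_pvm}. Concretely, take the equivalence relation $\sim$ on $S$ to be the one induced by the diagram automorphism $\bar\tau$ (equivalently, by $\sigma$ acting on the nodes of the finite Dynkin diagram, with the affine node fixed), and set $\mathfrak s := \mathfrak h^\tau$, the fixed-point subspace of $\tau$ acting on $\mathfrak h$. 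The key point is that $\tau$ and $\bar\tau$ act identically on $\mathfrak h$: from the explicit formulas $\tau(x\otimes t^k)=e^{-2k\pi i/n}\sigma(x)\otimes t^k$ and $\bar\tau(x\otimes t^k)=\sigma(x)\otimes t^k$, on the degree-zero part $\mathfrak g_0\otimes t^0$ (which contains $\mathfrak h_0$) the scalar $e^{-2k\pi i/n}$ is $1$, and both fix $c$ and $d$; since $\mathfrak h = \mathfrak h_0 \oplus \mathbb Cc\oplus\mathbb Cd$, we get $\tau|_{\mathfrak h}=\bar\tau|_{\mathfrak h}$, hence $\mathfrak h^\tau=\mathfrak h^{\bar\tau}$.

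**Next I would** invoke Proposition~\ref{prop-h_gamma=k} with $\Gamma=\langle\bar\tau\rangle$: since $\mathfrak g$ is untwisted affine, the associated GCM has nullity $1$, so the proposition applies and gives $\mathfrak h^{\bar\tau}=\mathfrak k=\bigcap_{i\sim j}\mathrm{Ker}(\alpha_i-\alpha_j)$. Combined with the previous paragraph, $\mathfrak h^\tau=\mathfrak k$. By the Proposition immediately before the display defining $\mathfrak s$ (``any collection of simple roots corresponding to distinct $\sim$ orbit representatives forms a linearly independent set when restricted to $\mathfrak k$''), the subspace $\mathfrak s=\mathfrak h^\tau=\mathfrak k$ satisfies both required conditions: $\alpha_i|_{\mathfrak s}=\alpha_j|_{\mathfrak s}$ when $i\sim j$ by definition of $\mathfrak k$, and the restricted orbit-representative simple roots are linearly independent. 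Moreover, by Proposition~\ref{prop_graph_tildeconnected} applied to the (connected) affine Dynkin diagram $G$ with the group $\langle\bar\tau\rangle$, every connected $\sim$-stable subset of $S$ is automatically equiconnected (a lean lift picks exactly one node per orbit), so the hypothesis $(\lambda_k,I_k),(\mu_k,J_k)\in\bar{\cC}$ in the statement is meaningful and consistent, and symmetry of the $\lambda_k,\mu_k$ is equivalent to $\langle\bar\tau\rangle$-invariance.

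**Then the proof is a direct citation:** applying Corollary~\ref{cor_tensor_pvm} with $\p:\mathfrak h^*\to\mathfrak s^*=(\mathfrak h^\tau)^*$ the restriction map, the product equality $\prod_{k=1}^r \mathrm{ch}_{\mathfrak h^\tau}M(\lambda_k,I_k)=\prod_{k=1}^r \mathrm{ch}_{\mathfrak h^\tau}M(\mu_k,J_k)$ holds if and only if $\p(\sum_k\lambda_k)=\p(\sum_k\mu_k)$ and there is $\sigma\in\mathfrak S_r$ with $(\lambda_k,I_k)\approx(\mu_{\sigma(k)},J_{\sigma(k)})$ for all $k$. Finally, exactly as in the last two sentences of the proof of Corollary~\ref{cor_fixedpointsubalg}, since every $\lambda_k$ and $\mu_k$ is symmetric (i.e.\ $\langle\bar\tau\rangle=\langle\tau|_{\mathfrak h}\rangle$-invariant), the equality $\p(\sum_k\lambda_k)=\p(\sum_k\mu_k)$ on $\mathfrak h^\tau$ upgrades to $\sum_k\lambda_k=\sum_k\mu_k$ on all of $\mathfrak h$ (a $\Gamma$-invariant functional on $\mathfrak h$ is determined by its restriction to $\mathfrak h^\Gamma$ together with the constraint relations, and the difference lies in the span of the $\alpha_i-\alpha_j$ which it already annihilates). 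The converse direction follows from the character formula~\eqref{parabolicverma} as before.

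**The main obstacle**—really the only nontrivial point—is verifying that $\mathfrak h^\tau=\mathfrak h^{\bar\tau}$, i.e.\ that passing from the untwisted automorphism $\bar\tau$ to the twisted one $\tau$ does not change the fixed subspace of $\mathfrak h$; once this is in hand, everything else is an application of machinery already set up in \S5 and the first half of \S6. One should also double-check that the twisted affine algebra $\mathfrak g^\tau$ genuinely arises this way (cited from \cite[\S18.4]{Carter}) so that the statement about restricted characters is the intended one, and that the nullity-$\le 1$ hypothesis of Proposition~\ref{prop-h_gamma=k} is indeed satisfied for untwisted affine type (it is: the nullity equals $1$).
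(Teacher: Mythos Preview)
Your proposal is correct and identifies the same key observation as the paper: $\tau|_{\mathfrak h}=\bar\tau|_{\mathfrak h}$, hence $\mathfrak h^\tau=\mathfrak h^{\bar\tau}$. The paper's proof is more economical---it simply cites Corollary~\ref{cor_fixedpointsubalg} directly (with $\Gamma=\langle\bar\tau\rangle$) once that identity is noted, whereas you re-derive the argument from Corollary~\ref{cor_tensor_pvm}; but since Corollary~\ref{cor_fixedpointsubalg} was itself proved from Corollary~\ref{cor_tensor_pvm} by exactly the steps you outline, the two routes are essentially the same.
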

\begin{proof}
    The proof is immediate from Corollary~\ref{cor_fixedpointsubalg}, because $\tau = \bar{\tau}$ when restricted to $\mathfrak h$. 
\end{proof}

\bibliographystyle{plain}
\bibliography{biblio}

\end{document}